\DeclareOldFontCommand{\rm}{\normalfont\rmfamily}{\mathrm}
\DeclareOldFontCommand{\sf}{\normalfont\sffamily}{\mathsf}
\DeclareOldFontCommand{\bf}{\normalfont\bfseries}{\mathbf}
\DeclareOldFontCommand{\it}{\normalfont\itshape}{\mathit}
\def\blankfootnote{\xdef\@thefnmark{}\@footnotetext}
\newcommand{\onto}{\ensuremath{\twoheadrightarrow}}
\theoremstyle{plain}
\newtheorem{thm}{Theorem}[section]
\newtheorem{pro}[thm]{Proposition}
\newtheorem{lem}[thm]{Lemma}
\newtheorem{cor}[thm]{Corollary}
\theoremstyle{definition}
\newtheorem{ex}[thm]{Example}
\newtheorem{rmk}[thm]{Remark}
\newcommand{\NN}{\mathbb{N}}
\newcommand{\ZZ}{\mathbb{Z}}
\newcommand{\FF}{\mathbb{F}}
\newcommand{\QQ}{\mathbb{Q}}
\newcommand{\cA}{\mathcal{A}}
\newcommand{\bp}{{\bm p}}
\newcommand{\A}{{\mathcal A}}
\newcommand{\ve}{{\varepsilon}}
\newcommand{\dup}{\%}
\DeclarePairedDelimiter{\abs}{\lvert}{\rvert}
\DeclareMathOperator{\st}{st}
\DeclareMathOperator{\len}{sup_*}
\DeclareMathOperator{\Ker}{Ker}
\DeclareMathOperator{\Des}{Des}
\DeclareMathOperator{\sDes}{sDes}
\DeclareMathOperator{\QSym}{QSym}
\DeclareMathOperator{\comaj}{comaj}
\DeclareMathOperator{\des}{des}
\DeclareMathOperator{\Mat}{M}
\DeclareMathOperator{\ask}{ask}
\DeclareMathOperator{\cc}{cc}
\DeclareMathOperator{\oc}{oc}
\DeclareMathOperator{\palette}{{pal}}
\DeclareMathOperator{\symbols}{{sym}}
\DeclareMathOperator{\supp}{{supp}}
\DeclareMathOperator{\col}{col}
\newcommand{\bx}{{\bm x}}
\newcommand{\Zeta}{\mathsf{Z}}
\newcommand{\bcol}{\mathbf{col}}
\numberwithin{equation}{section}
\renewcommand{\le}{\leqslant}
\renewcommand{\ge}{\geqslant}
\newcommand{\Asterisk}{\!\mathop{\scalebox{1.5}{\raisebox{-0.2ex}{$\ast$}}}}%
\newcommand{\BigAsterisk}{\!\mathop{\scalebox{2.5}{\raisebox{-0.2ex}{$\ast$}}}}%
\def\emph{}
\DeclareTextFontCommand{\bfemph}{\bfseries}
\DeclareTextFontCommand{\itemph}{\itshape}
\def\emph{\bfemph}
\title{Coloured shuffle compatibility, Hadamard products, and ask zeta functions}
\author{%
  \href{mailto:angela.carnevale@universityofgalway.ie}{Angela Carnevale}%
  \footnote{School of Mathematical and Statistical Sciences, University of Galway, Ireland}
  \and
  \href{mailto:vasileios.moustakas@dm.unipi.it}{Vassilis Dionyssis Moustakas}%
  \footnote{Dipartimento di Matematica, Universit\`a di Pisa, Italy}
  \and
  \setcounter{footnote}{1}
  \href{mailto:tobias.rossmann@universityofgalway.ie}{Tobias Rossmann}${}^{\fnsymbol{footnote}}$}
\date{}
\begin{document}

\maketitle

\begin{abstract}
  We devise an explicit method for computing
  combinatorial formulae for Hadamard products of
  certain rational generating functions.
  The latter arise naturally when studying so-called ask zeta functions of direct sums of
  modules of matrices or class- and orbit-counting zeta functions of
  direct products of nilpotent groups.
  Our method relies on shuffle compatibility of coloured permutation
  statistics and coloured quasisymmetric functions, extending recent work of
  Gessel and Zhuang.
\end{abstract}

\blankfootnote{\\\noindent{\itshape 2020 Mathematics Subject Classification.}
  05A15, 
  11M41, 
  20D15, 
  20E45, 
  05E05, 
  15B33  

  \noindent {\itshape Keywords.}
  Coloured permutations, permutation statistics, Hadamard products,
  shuffle compatibility, average sizes of kernels, zeta functions
}

\section{Introduction}\label{sec:introduction}

\paragraph{Permutation statistics and shuffle compatibility.}
Permutation statistics are functions defined on permutations and their
generalisations. Studying the behaviour of said functions on sets of
permutations is a classical theme in algebraic and enumerative
combinatorics.
The origins of permutation statistics can be traced back to work of
Euler and MacMahon.
The past decades saw a flurry of further developments in the area;
see e.g.\ \cite{reiner93, Bre94, steingr94, Petersen, FZ/90, bjornerwachs91,
  BjBr/05,Stanley/97}.

Recently, Gessel and Zhuang~\cite{GZ18} developed an algebraic framework for
systematically studying what they dubbed \itemph{shuffle-compatible}
permutation statistics by means of associated shuffle algebras.
In their work, \itemph{quasisymmetric functions} (and the closely related
$P$-partitions) as well as \itemph{Hadamard products} of rational
generating functions played key roles.
For further developments building upon \cite{GZ18}, see e.g.\ \cite{Gri18}.

\paragraph{Zeta functions and enumerative algebra.}
Numerous types of \itemph{zeta functions} have been employed in the study of
enumerative problems surrounding algebraic structures.
L.\ Solomon~\cite{Sol77} introduced zeta functions associated
with integral representations.
In another influential paper, Grunewald, Segal, and Smith~\cite{GSS/88}
initiated the study of zeta functions associated with (nilpotent and pro-$p$)
groups.
Among the counting problems considered in \cite{GSS/88} are the enumeration of
subgroups and normal subgroups of suitable groups according to their
indices.
The associated zeta functions are then Dirichlet generating functions
encoding, say, numbers of subgroups of given index.

Over the following decades, a variety of methods have
been developed and applied to predict the behaviour and to study symmetries
of zeta functions associated with algebraic structures, and to produce
explicit formulae.
Theoretical work in this area often employs a blend of combinatorics and
$p$-adic integration; see \cite{Vol11} for a survey.
On the practical side, a range of effective methods have
been devised, implemented (see \cite{zeta}), and used to symbolically compute zeta functions of algebraic
structures; see \cite{Ros17/1489} and the references therein.

\paragraph{Variation of the prime: symbolic enumeration.}
A common feature of zeta functions $\zeta_G(s)$ attached to an algebraic
structure $G$ in the literature is that they often admit an
Euler factorisation $\zeta_G(s) = \prod_p \zeta_{G,p}(s)$ into
\itemph{local factors} indexed by primes $p$.
For instance, when $G$ is a nilpotent group and we are
enumerating its subgroups, we can view $\zeta_{G,p}(s)$ as the ordinary
generating function in $p^{-s}$ encoding the numbers of subgroups of
$p$-power index of $G$.

Results from $p$-adic integration often guarantee that local
factors $\zeta_{G,p}(s)$ are rational in $p^{-s}$, i.e.\ of the form $\zeta_{G,p}(s) =
  W_p(p^{-s})$ for some $W_p(Y) \in \QQ(Y)$;
see, in particular, \cite[Thm~1]{GSS/88}.
A key theme is then to study how the $W_p(Y)$ vary with the
prime $p$.
By the \itemph{symbolic enumeration} of the objects counted by $\zeta_G(s)$,
we mean the task of providing a meaningful description of $W_{p}(Y)$ as
a function of $p$.
In a surprising number of cases of interest, deep \itemph{uniformity results}
ensure the existence of a \itemph{single} bivariate rational function $W(X,Y)$ such that $\zeta_{G,p}(s) =
  W(p,p^{-s})$ for all primes $p$ (perhaps ignoring a finite number of
exceptions);
see, for example,
\cite[Thm~1.1]{BGS/21},
\cite[Thm~1.2]{CSV/19}, \cite[Thm~A]{cico}, or
\cite[Thm~B]{StasinskiVoll/14}.
In such situations, understanding our zeta function is tantamount
to understanding $W(X,Y)$.

In this context, permutation statistics (and, more generally, combinatorial
objects such as graphs and posets) have recently found spectacular
applications, in particular when it comes to describing the numerators of the
rational functions $W(X,Y)$ from above; see, for instance,
\cite{BGS/21,CarnevaleVoll/18,CSV/19,CSV_fpsac,CSV/18,StasinskiVoll/14}.
Conversely, the need for combinatorial descriptions of such zeta functions gave
rise to new directions in the study of permutation statistics and,
more generally, combinatorial objects; see, e.g.~\cite{BC1,Carnevale/17,
  CT,Denert/90,FZ/90,DMS/23,StasinskiVoll/13}.

\paragraph{Average sizes of kernels and orbits of groups.}
Introduced in \cite{Rossmann/18} and developed further in \cite{ask2,cico,board},
\itemph{ask zeta functions} are generating functions encoding average
sizes of kernels in modules of matrices.
One main motivation for studying these functions comes from group theory.
Indeed, for groups with a sufficiently powerful Lie theory, the enumeration
of linear orbits and conjugacy classes boils down to determining average sizes
of kernels within matrix
Lie algebras---this is essentially the orbit-counting lemma.
For details, see \cite[Prop.\ 8.13]{Rossmann/18}, \cite[\S 8.5]{Rossmann/18}, and
\cite[Prop.\ 6.4]{ask2}.

Amidst a plethora of algebraically-defined zeta functions,
ask zeta functions stand out as particularly amenable to combinatorial
methods.
Indeed, natural operations at the level of the modules (or groups)
often translate into natural operations of corresponding rational
generating functions.
In particular, ask zeta functions of (diagonal) direct sums of modules are
Hadamard products of the ask zeta function of the summands;
see \cite[\S 3.4]{Rossmann/18}.
\\

\noindent
In the present paper, we employ algebro-combinatorial tools such as shuffle
algebras and permutation statistics to answer questions pertaining to ask zeta
functions.
In this way, our work bridges two mathematical areas, namely algebraic
combinatorics and enumerative algebra.
Our main result belongs to  algebraic combinatorics.
Its applications to enumerative algebra are obtained by means of a method
for explicitly computing certain Hadamard products and this method acts as a
bridge between the two fields.

\paragraph{Main result.}
A first, somewhat coincidental, application of permutation statistics in the
context of ask zeta functions appeared in \cite[\S 5.4]{Rossmann/18};
see also Remark~\ref{r:brenti}.
Here, we develop this connection significantly further.
In Sections~\ref{s:qsym}--\ref{s:coloured},
we introduce shuffle algebras attached to coloured descent statistics (based
on \cite{Moustakas/th}) and we relate them to Poirier's coloured quasisymmetric
functions~\cite{Poirier}.
Along the way, in Section~\ref{ss:A_des_comaj_p},
we consider a triple of coloured permutation statistics which simultaneously
keeps track of (coloured) descent numbers, (coloured) comajor indices, and
colour multiplicities---this triple turns out to be shuffle compatible.
Drawing upon the work of Gessel and Zhuang~\cite{GZ18},
in Theorem~\ref{thm:triple_compatibility}, we obtain an explicit
embedding of the shuffle algebra associated with our triple into a power
series ring with multiplication given by the Hadamard
product.
We consider this structural result about shuffle algebras
attached to coloured permutation statistics as the main contribution
of this paper to algebraic combinatorics.

\paragraph{A method.}
In Section~\ref{elementary}, we state Theorem~\ref{thm:main_elementary},
a self-contained and elementary form of
Theorem~\ref{thm:triple_compatibility} which acts as our bridge
between algebraic combinatorics and enumerative algebra.
Designed with a view towards applications to ask zeta functions,
this result provides a method for computing explicit
formulae for Hadamard products of certain rational functions attached to
combinatorial data (``labelled coloured configuration'').
In Section~\ref{ss:proof_elementary},
we will deduce Theorem~\ref{thm:main_elementary}
from Theorem~\ref{thm:triple_compatibility}.

\paragraph{Applications.}
In Section~\ref{s:appzeta}, we conclude our paper with applications to zeta
functions.  We use our method to produce explicit combinatorial
formulae for Hadamard products within natural (infinite) families of
zeta functions.  In particular, Corollary~\ref{cor:Hadamard_Mde}
completely settles a question from \cite{cico} which asked for an
interpretation of certain ask zeta functions attached to hypergraphs
in terms of permutation statistics.  As our main group-theoretic
applications, we then obtain explicit formulae for zeta functions
enumerating
\begin{enumerate}[(a)]
  \item conjugacy classes in groups derived from direct products of free
        class-$2$-nilpotent groups (Corollary~\ref{cor:hada_F2d}) and
      \item linear orbits of direct products of full unitriangular
        matrix groups (Corollary~\ref{cor:hada_Ud}).
\end{enumerate}

\vspace*{1em}

\noindent
An extended abstract \cite{fpsac} of the present article was published in the
proceedings of the conference FPSAC 2024.
Some results in Sections \ref{ss:shuffles} and \ref{ss:A_des_comaj_p}
follow (otherwise unpublished) results from the second
author's PhD thesis \cite{Moustakas/th}.

\vspace*{1em}

\noindent
Throughout this article, all rings and algebras will be assumed to be
commutative and unital.

\section{Hadamard products and coloured configurations}
\label{elementary}

In this section, we provide relevant concepts and notation  and we provide a
self-contained account of our main result pertaining to Hadamard products of
suitable rational generating functions.
Its proof relies on the coloured shuffle compatibility of certain permutation
statistics and the structure of associated coloured shuffle algebras.
We will describe the latter in Section~\ref{s:coloured}.

\subsection{Coloured permutations}
\label{ss:colperm}

\paragraph{Coloured elements, sets, and integers.}
We consider the poset $\Gamma = \{ 0 > 1 > 2 > \dotsb\}$, the elements of
which we call \emph{colours}.
By a \emph{coloured element} of a set $T$, we mean an expression $t^c$
(formally: a pair $(t,c)$) for $t\in T$ and $c\in\Gamma$.
By a \emph{coloured subset} of $T$, we mean a set of the form
$\{t^{\gamma(t)} : t\in S\}$ for a subset $S \subseteq T$ and a function
$\gamma\colon S \to \Gamma$.

Let $\Sigma = \{ 1 < 2 < \dotsb\}$ be the usual poset of positive integers.
On the set of all coloured positive integers, we consider the total order
$$\dotsb <1^{1}<2^{1}<\dotsb<1^0<2^0<\dotsb.$$
That is, $\sigma_1^{\gamma_1}<\sigma_2^{\gamma_2}$ if and only if $\gamma_1=\gamma_2$ and
$\sigma_1<\sigma_2$, or if $\gamma_1>\gamma_2$ in $\ZZ$ (equivalently:
$\gamma_1 < \gamma_2$ in $\Gamma$).
This is the \emph{colour order} (see \cite[\S 3]{BC12}) and it corresponds to
the left lexicographic order on $\Gamma\times \Sigma$.

\paragraph{Coloured permutations and descents.}
By a \emph{coloured permutation}
we mean a string $\bm a = \sigma^\gamma = \sigma_1^{\gamma_1} \dotsb
\sigma_n^{\gamma_n}$ for $n\ge 0$, distinct $\sigma_1,\dotsc,\sigma_n\in \Sigma$,
and arbitrary $\gamma_1,\dotsc,\gamma_n\in \Gamma$.
We write $\abs{\bm a} = n$ for the \emph{length} of $\bm a$.
(There is a unique coloured permutation of length zero.)
We further write $\symbols(\bm a) = \{ \sigma_1,\dotsc,\sigma_n\}$ and
$\palette(\bm a) = \{ \gamma_1,\dotsc,\gamma_n\}$
for the \emph{set of symbols} and \emph{palette} of $\bm a$, respectively.
Finally, let
$\palette^*(\bm a) = \palette(\bm a) \setminus \{0\}$.
For instance, $\bm a = 5^12^23^01^0$ is a coloured permutation of length $4$
with $\symbols(\bm a)=\{1,2,3,5\}$ and $\palette^*(\bm a)=\{1,2\}$.
Note that our notion of coloured permutations generalises the group-theoretic
one which corresponds to the case $\symbols(\bm a) = \{ 1,\dotsc,\abs{\bm a} \}$.
  
As usual, we write $[m]=\{1,\dotsc,m\}$. 
The \emph{descent set} of a coloured permutation $\bm a = \sigma^\gamma$ as above
is
$$\Des(\bm a)=\begin{cases} \{i\in[n-1]:\sigma_i^{\gamma_i}>\sigma_{i+1}^{\gamma_{i+1}}\}, & \text{
              if } \gamma_1=0,                                                                    \\
              \{i\in[n-1]:\sigma_i^{\gamma_i}>\sigma_{i+1}^{\gamma_{i+1}}\}\cup
              \{0\},                                                         & \text{ otherwise.}\end{cases}$$
Here, the comparisons involve the colour order defined above.
The \emph{descent number} and \emph{comajor index} are defined (as usual) as
functions of the descent set: $\des(\bm a)=\abs{\Des(\bm a)}$ and
$\comaj(\bm a)=\sum_{i\in\Des(\bm a)}(n-i)$.
For $\bm a = 5^12^23^01^0$ as before, we then have $\Des(\bm a)=\{0,1,3\}$,
$\des(\bm a)=3$, and $\comaj(\bm a)=8$.

\subsection{Labelled coloured configurations}
\label{ss:lcc}

\paragraph{Coloured configurations.}
Let $\cA$ be the set of all coloured permutations, and
let $\NN_0\cA$ be the free commutative monoid with basis $\cA$.
We call elements of $\NN_0\cA$ \emph{coloured configurations}.
These elements are of the form
$f = \sum_{\bm a\in \cA} f_{\bm a} \bm a$, where each $f_{\bm a}$ belongs to
$\NN_0$ and almost all $f_{\bm a}$ are zero.
(Hence, coloured configurations and multisets of coloured permutations are
identical concepts.)
Write $\supp(f) = \{ \bm a\in \cA : f_{\bm a}\neq 0\}$ and $\symbols(f) =
\bigcup_{\bm a\in \supp(f)} \symbols(\bm a)$.
Further let $\palette^*(f) = \bigcup_{\bm a \in \supp(f)} \palette^*(\bm a)$.
We call $f,g\in \NN_0\cA$
\emph{(symbol-)disjoint} if $\symbols(f) \cap \symbols(g) = \varnothing$;
if, in addition, $\palette^*(f) \cap \palette^*(g) = \varnothing$, then $f$
and $g$ are \emph{strongly disjoint}.

Let $\bm a,\bm b\in \cA$ be disjoint.
A \emph{shuffle} of $\bm a$ and $\bm b$ is a coloured permutation of length
$\abs{\bm a} + \abs{\bm b}$ which contains both $\bm a$ and $\bm b$ as not
necessarily contiguous substrings. 
Let $\bm a\shuffle \bm b\in \NN_0\cA$ be the sum over all shuffles of $\bm a$
and $\bm b$.
(Since $\bm a$ and $\bm b$ are disjoint, these shuffles are themselves
coloured permutations.)
We bi-additively extend this to define $f\shuffle g$ for disjoint $f,g\in
  \NN_0\cA$;
observe that $\symbols(f\shuffle g) = \symbols(f) \cup \symbols(g)$
and $\palette(f\shuffle g) = \palette(f) \cup \palette(g)$.
(Hence, $\palette^*(f\shuffle g) = \palette^*(f) \cup \palette^*(g)$.)

\paragraph{Labels.}
Let $\mathbb U = \{ \pm X^{k} : k\in \ZZ\}$, viewed as a subgroup of the multiplicative group
of the field $\QQ(X)$.
For $\alpha\colon \Gamma\to \mathbb U$, write $\supp(\alpha) = \{ c\in \Gamma :
  \alpha(c)\neq 1\}$ and, for $\bm a = \sigma_1^{\gamma_1} \dotsb
  \sigma_n^{\gamma_n}$ as above, let $\alpha(\bm a) = \prod_{i=1}^n
  \alpha(\gamma_i)$.
A \emph{labelled coloured configuration} is a pair $(f,\alpha)$, where $f\in
  \NN_0\cA$ and $\alpha\colon \Gamma\to \mathbb U$ satisfies $\supp(\alpha) \subseteq
  \palette^*(f)$.

\paragraph{Equivalence.}
Let $(f,\alpha)$ be a labelled coloured configuration.
Let $\phi\colon \symbols(f) \to S$ and $\psi\colon \palette^*(f) \to P$
be order-preserving bijections onto finite subsets of $\Sigma$ and
$\Gamma\setminus\{0\}$, respectively.
Given $\phi$ and $\psi$, define a labelled coloured configuration
$(f',\alpha')$ as follows.
For $\bm a\in \supp(f)$, say $\bm a = \sigma_1^{\gamma_1}\dotsb
  \sigma_n^{\gamma_n}$, write $\bm a' = \phi(\sigma_1)^{\psi(\gamma_1)}\dotsb
  \phi(\sigma_n)^{\psi(\gamma_n)}$,
where $\psi(0) := 0$.
Define $f' = \sum_{\bm a\in \supp(f)}f_{\bm a}\bm a'$.
We define $\alpha'$ to be the function $\Gamma \to \mathbb U$
whose support is contained in $P$ and which satisfies $\alpha'(\psi(c))
  = \alpha(c)$ for $c\in \palette^*(f)$.
We call $(f,\alpha)$ and each $(f',\alpha')$ (as $\phi$ and $\psi$ range
over possible choices) \emph{equivalent}, written $(f,\alpha) \asymp
(f',\alpha')$.
Informally, two labelled coloured configurations are
equivalent if one can be obtained from the other by renaming both its
symbols and its nonzero colours in an order-preserving manner.
This defines an equivalence relation on labelled coloured configurations.

\paragraph{Coherence.}
We say that labelled coloured configurations $(f,\alpha)$ and $(g,\beta)$ are
\emph{coherent} if $f$ and $g$ are disjoint and if, in addition, $\alpha(c) =
  \beta(c)$ for all $c\in \palette^*(f)\cap \palette^*(g)$.
In that case, we may define $\alpha\cup\beta\colon
  \Gamma \to \mathbb U$ via
$$
  (\alpha\cup\beta)(c) = \begin{cases}
    \alpha(c), & \text{if } c\in \palette^*(f), \\
    \beta(c),  & \text{if } c\in \palette^*(g), \\
    1,         & \text{otherwise.}
  \end{cases}
$$%
The pair $(f\shuffle g, \alpha\cup\beta)$ is then a labelled coloured configuration too.
(Indeed, we have
$\supp(\alpha\cup \beta) = \supp(\alpha)\cup \supp(\beta)
  \subseteq \palette^*(f)\cup \palette^*(g) = \palette^*(f\shuffle g)$.)

Note that if $(f,\alpha)$ and $(g,\beta)$ are labelled coloured configurations
such that $f$ and $g$ are strongly disjoint, then $(f,\alpha)$ and
$(g,\beta)$ are automatically coherent and, moreover, $\alpha\cup\beta = \alpha\beta$
is the pointwise product of $\alpha$ and $\beta$.

\subsection{From labelled coloured configurations to Hadamard products}

Given a labelled coloured configuration $(f,\alpha)$ and $\ve \in \ZZ$, we define a rational formal power series
\[
  W_{f,\alpha}^\ve = W_{f,\alpha}^\ve(X,Y) = \sum_{\bm a \in \supp(f)}
  f_{\bm a} \frac{\alpha(\bm a)X^{\ve\comaj(\bm a)} Y^{\des(\bm a)} }
  {(1-Y)(1-X^{\ve} Y)\dotsb(1-X^{\ve\lvert \bm a\rvert}Y)}
  \in \QQ(X)\llbracket Y\rrbracket .
\]
Note that, by construction, if $(f,\alpha)\asymp (f',\alpha')$, then
$W_{f,\alpha}^\ve = W_{f',\alpha'}^\ve$ for all $\ve \in \ZZ$.
Indeed, for $\bm a \in \supp(f)$ corresponding to $\bm a' \in \supp(f')$ as in
the definition of equivalence in \S\ref{ss:lcc}, we have
$\Des(\bm a) = \Des(\bm a')$ and $\alpha(\bm a) = \alpha'(\bm a')$.

\begin{ex}
  \label{ex:1^0+1^1}
  Let $f = 1^0 + 1^1$.
  Let $\alpha\colon \Gamma\to \mathbb U$ with $\supp(\alpha) \subseteq\palette^*(f) =
    \{1\}$.
  Then, by definition, $\alpha(0) = 1$ and we obtain
  \[
    W_{f,\alpha}^\ve = \frac{1 + \alpha(1) X^\ve Y}{(1-Y)(1-X^\ve Y)}.
  \]
\end{ex}

Recall that the \emph{Hadamard product} of two formal power series $A(Y) =
  \sum_{k=0}^\infty a_k Y^k$ and $B(Y) = \sum_{k=0}^\infty b_k Y^k$
(with coefficients in a common field, say) is
the power series $A(Y) *_Y B(Y) = \sum_{k=0}^\infty a_kb_k Y^k$.
It is well known that the Hadamard product of two rational formal power series
is itself rational; see e.g.\ \cite[Ch.\ 1]{BR88} or
\cite[Prop.\ 4.2.5]{Stanley/97}.
Despite the simple definition of this operation, explicitly computing closed
formulae for Hadamard products of rational generating functions appears to be
difficult.

The following is the key theoretical result underpinning our method for
explicitly computing Hadamard products of ask zeta functions.

\begin{thm}
  \label{thm:main_elementary}
  Let $(f,\alpha)$ and $(g,\beta)$ be coherent labelled coloured
  configurations.
  Then
  $$W_{f,\alpha}^\ve *_Y W_{g,\beta}^\ve = W_{f\shuffle g, \, \alpha\cup\beta}^\ve$$
  for each $\ve\in \ZZ$.
\end{thm}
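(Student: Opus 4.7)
My plan is to reduce the identity to the case of single coloured permutations and then invoke the coloured shuffle compatibility of $(\des,\comaj)$, packaged as Theorem~\ref{thm:triple_compatibility}.

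\emph{Reduction to singletons.} Both sides of the claimed identity are bi-additive in $(f,g)$ once the labels are fixed: the construction $f\mapsto W^\ve_{f,\alpha}$ is additive in $f$ by definition, the Hadamard product $*_Y$ is bilinear, and the shuffle $\shuffle$ is bi-additive on $\NN_0\cA$. It therefore suffices to verify the identity in the case $f=\bm a$ and $g=\bm b$ for two coloured permutations with disjoint symbol sets carrying coherent labels $\alpha$ and $\beta$.

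\emph{Factoring out the labels.} For any shuffle $\bm c$ of $\bm a$ and $\bm b$, the multiset of colours appearing in $\bm c$ is the disjoint union of those of $\bm a$ and $\bm b$. Using the coherence condition $\alpha(c)=\beta(c)$ for $c\in\palette^*(\bm a)\cap\palette^*(\bm b)$, together with $\alpha(0)=\beta(0)=1$ (forced by $\supp(\alpha)\subseteq\palette^*(\bm a)$ and $\supp(\beta)\subseteq\palette^*(\bm b)$), one checks position-by-position that $(\alpha\cup\beta)(\bm c)=\alpha(\bm a)\beta(\bm b)$. Dividing through by this common scalar reduces the claim to the label-free identity
$$\frac{X^{\ve\comaj(\bm a)}Y^{\des(\bm a)}}{\prod_{i=0}^{\lvert\bm a\rvert}(1-X^{\ve i}Y)} *_Y \frac{X^{\ve\comaj(\bm b)}Y^{\des(\bm b)}}{\prod_{j=0}^{\lvert\bm b\rvert}(1-X^{\ve j}Y)} = \sum_{\bm c\in\bm a\shuffle\bm b}\frac{X^{\ve\comaj(\bm c)}Y^{\des(\bm c)}}{\prod_{k=0}^{\lvert\bm c\rvert}(1-X^{\ve k}Y)}.$$

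\emph{Applying shuffle compatibility.} This is exactly the Hadamard-product form of the coloured shuffle compatibility of $(\des,\comaj)$ on disjoint coloured permutations. By Theorem~\ref{thm:triple_compatibility}, the map sending $\bm a$ to the rational function $X^{\ve\comaj(\bm a)}Y^{\des(\bm a)}/\prod_{i=0}^{\lvert\bm a\rvert}(1-X^{\ve i}Y)$ (i.e.\ the specialisation $q\mapsto X^{\ve}$ of the image of $\bm a$ under the shuffle-algebra embedding) intertwines the shuffle product on $\NN_0\cA$ with the Hadamard product $*_Y$ on $\QQ(X)\llbracket Y\rrbracket$. Specialising the general embedding at the pair $(\bm a,\bm b)$ yields the displayed equality. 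Since the underlying identity lives in $\QQ(q)\llbracket Y\rrbracket$, the substitution $q\mapsto X^{\ve}$ is valid for every $\ve\in\ZZ$, including $\ve\le 0$.

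\emph{Main obstacle.} All genuine combinatorial work is absorbed into Theorem~\ref{thm:triple_compatibility}, whose proof requires the coloured shuffle-algebra machinery of Sections~\ref{s:qsym}--\ref{s:coloured}. Within the argument above, the only delicate point is the label bookkeeping: one must verify that the coherence hypothesis is both necessary and sufficient for $(\alpha\cup\beta)(\bm c)$ to be independent of the particular shuffle $\bm c$, so that the \emph{a priori} unlabelled shuffle-compatibility statement upgrades to the labelled one. Everything else is a direct translation of the shuffle-algebra embedding into the rational-function language of $W^\ve_{f,\alpha}$.
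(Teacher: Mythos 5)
Your argument is correct and follows essentially the same route as the paper: both reduce the statement to the injective algebra homomorphism of Theorem~\ref{thm:triple_compatibility} composed with a coefficient-wise specialisation into $\QQ(X)\llbracket Y\rrbracket$, which turns the shuffle product into the Hadamard product. The only (immaterial) difference is in the label bookkeeping: the paper carries the labels through the homomorphism by specialising $p_i \gets \rho_i$ in the monomial $\bp^{\bcol(\bm a)}$, whereas you strip them off beforehand via the observation $(\alpha\cup\beta)(\bm c)=\alpha(\bm a)\beta(\bm b)$ and then specialise $\bp\gets(1,\dotsc,1)$.
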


We will prove Theorem~\ref{thm:main_elementary} in Section~\ref{ss:proof_elementary}.

\begin{ex}
  \label{ex:12}
  Let $f = 1^0 + 1^1$ and $g = 2^0 + 2^2$.
  Then
  \begin{align*}
    f\shuffle g & = (1^0 + 1^1)\shuffle (2^0 + 2^2)                                       \\
                & = 1^0 \shuffle 2^0 + 1^0 \shuffle 2^2 + 1^1 \shuffle 2^0 + 1^1 \shuffle
    2^2                                                                                   \\
                & = 1^0 2^0 + 2^0 1^0 + 1^0 2^2 + 2^2 1^0 + 1^1 2^0 + 2^0 1^1 + 1^1 2^2 +
    2^2 1^1.
  \end{align*}

  The descent numbers and comajor indices of the coloured permutations
  appearing as summands above are recorded in the following table:
  \begin{center}\begin{tabular}{l|cccccccc}
               & $1^0 2^0$ & $2^0 1^0$ & $1^0 2^2$ & $2^2 1^0$ & $1^1 2^0$ & $2^0 1^1$ & $1^1 2^2$ & $2^2 1^1$ \\
      \hline
      $\des$   & 0         & 1         & 1         & 1         & 1         & 1         & 2         & 1         \\
      $\comaj$ & 0         & 1         & 1         & 2         & 2         & 1         & 3         & 2
    \end{tabular}
  \end{center}

  Let $\alpha$ and $\beta$ satisfy $\supp(\alpha) \subseteq \{1\}$ and
  $\supp(\beta) \subseteq\{2\}$.
  Then, by Theorem~\ref{thm:main_elementary},
  \begin{align*}
    W_{f,\alpha}^\ve & *_Y W_{g,\beta}^\ve
    =
    \frac{1+\alpha(1) X^\ve Y }{(1-Y)(1-X^\ve Y)}
    *_Y
    \frac{1+\beta(2) X^\ve Y }{(1-Y)(1-X^\ve Y)} \\
                     & =
    \frac
    {1 + (1 + \alpha(1) + \beta(2))X^\ve Y +
    (\alpha(1) + \beta(2) + \alpha(1)\beta(2))X^{2\ve}Y
    + \alpha(1)\beta(2) X^{3\ve} Y^2
    }
    {(1-Y)(1-X^\ve Y)(1-X^{2\ve} Y)}
    \\ &
    = W_{f\shuffle g,\,\alpha\cup\beta}^\ve.
  \end{align*}
\end{ex}

\noindent
\begin{cor}
  Let $\ve\in \ZZ$ be fixed.
  Then the set
  \[
    \Bigl\{ W_{f,\alpha}^\ve : (f,\alpha) \text{ is a labelled coloured configuration}\Bigr\}
  \]
  is closed under Hadamard products in $Y$.
\end{cor}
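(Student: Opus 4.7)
The plan is to reduce to Theorem~\ref{thm:main_elementary} by exploiting the equivalence relation $\asymp$ on labelled coloured configurations, which does not change the associated rational function $W_{f,\alpha}^\ve$.

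Concretely, given two labelled coloured configurations $(f,\alpha)$ and $(g,\beta)$, I would first show that each can be replaced by an equivalent one so that the resulting pair is strongly disjoint (and hence automatically coherent, by the final remark of Section~\ref{ss:lcc}). To do this, I would choose order-preserving bijections $\phi_f\colon \symbols(f)\to S_f$ and $\phi_g\colon \symbols(g)\to S_g$ onto \emph{disjoint} finite subsets of $\Sigma$ (e.g.\ shift $\symbols(g)$ by a sufficiently large integer so that $S_f\cap S_g = \varnothing$), and analogously choose order-preserving bijections $\psi_f\colon \palette^*(f)\to P_f$ and $\psi_g\colon \palette^*(g)\to P_g$ onto disjoint finite subsets of $\Gamma\setminus\{0\}$. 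Since both $\Sigma$ and $\Gamma\setminus\{0\}$ are infinite and each of the sets $\symbols(f),\symbols(g),\palette^*(f),\palette^*(g)$ is finite, such choices always exist. The resulting equivalent pair $(f',\alpha')\asymp(f,\alpha)$ and $(g',\beta')\asymp (g,\beta)$ then satisfies $\symbols(f')\cap\symbols(g') = \varnothing$ and $\palette^*(f')\cap\palette^*(g') = \varnothing$, i.e.\ $f'$ and $g'$ are strongly disjoint.

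Once strong disjointness is achieved, coherence holds trivially, and Theorem~\ref{thm:main_elementary} applies to give
\[
  W_{f,\alpha}^\ve *_Y W_{g,\beta}^\ve
  = W_{f',\alpha'}^\ve *_Y W_{g',\beta'}^\ve
  = W_{f'\shuffle g',\,\alpha'\cup\beta'}^\ve,
\]
where the first equality uses invariance of $W^\ve$ under $\asymp$. Since $(f'\shuffle g',\alpha'\cup\beta')$ is itself a labelled coloured configuration, the right-hand side lies in the specified set, completing the proof.

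The only real point to check is the existence of the disjoint relabellings, which is immediate from the infinitude of $\Sigma$ and $\Gamma\setminus\{0\}$ together with the finiteness of the symbol sets and (non-zero) palettes of any given configuration; so there is no serious obstacle beyond bookkeeping once Theorem~\ref{thm:main_elementary} is in hand.
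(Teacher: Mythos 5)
Your proposal is correct and follows essentially the same route as the paper: replace the given configurations by $\asymp$-equivalent ones that are strongly disjoint (the paper only relabels $(g,\beta)$, keeping $(f,\alpha)$ fixed, but this is an immaterial difference), then invoke Theorem~\ref{thm:main_elementary} together with the invariance of $W^\ve$ under $\asymp$.
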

\begin{proof}
  Given coloured configurations $(f,\alpha)$ and $(g,\beta)$, we can find $(g',\beta')$ such that
  $f$ and $g'$ are strongly disjoint and $(g,\beta)\asymp(g',\beta')$.
  In that case, $W_{f,\alpha}^\ve *_Y W_{g,\beta}^\ve = W_{f,\alpha}^\ve *_Y W_{g',\beta'}^\ve =
    W_{f\shuffle g', \,\alpha\beta'}^\ve$ is computed by Theorem~\ref{thm:main_elementary}.
\end{proof}

In Section~\ref{s:appzeta}, we will apply our method based on
Theorem~\ref{thm:main_elementary} to provide explicit formulae for Hadamard
products of ask, class- and orbit-counting zeta functions.

\section{Coloured quasisymmetric functions}
\label{s:qsym}

For technical reasons, in this and the following section, we will only
consider coloured permutations with colours drawn from $\{ 0 > 1
  > \dotsb > r-1\} \subset \Gamma$.
For clarity, we occasionally refer to these as \emph{$r$-coloured
  permutations}.
(We similarly refer to $r$-coloured integers, etc.)

\subsection{Coloured quasisymmetric functions and descent sets}
\label{ss:qsym_sdes}

\paragraph{Coloured quasisymmetric functions.}
Let $x_i^{(j)}$ for $i=1,2,\dotsc$ and $j=0,1,\dotsc,r-1$ be independent
(commuting) variables. We write $\bx^{(j)} = (x_1^{(j)}, x_2^{(j)}, \dots)$.
The \emph{coloured quasisymmetric function} attached to an $r$-coloured
permutation $\bm a = \sigma^\gamma$ of length $n$ is
\begin{equation}\label{eq:qsym_def_color}
  F_{\bm a} =
  F_{\bm a}(\bx^{(0)}, \dots, \bx^{(r-1)}) =
  \sum_{\substack{1 \le i_1 \le i_2 \le \cdots \le i_n\\j \in \Des^*\!(\bm a) \, \Rightarrow \, i_j < i_{j+1}}} x_{i_1}^{(\gamma_1)}x_{i_2}^{(\gamma_2)}\cdots x_{i_n}^{(\gamma_n)},
\end{equation}
where $\Des^*\!(\bm a) = \Des(\bm a)\setminus \{0\}$.
This is a (homogeneous)
formal power series of degree $n$ in the variables
$\bx^{(0)}, \dots, \bx^{(r-1)}$.
These functions were first introduced in \cite{Poirier}.
For a more comprehensive account, see \cite{BH08}.
Our $F_{\bm a}$ is a special case of $F_{\mathbf{c}}$ from \cite[\S 6.2]{BH08}.
In the literature, authors have employed various choices of total orders in their
definitions of coloured quasisymmetric functions; see, in particular,
\cite{HP/10,Moustakas/th,moustakas21}.
The space $\QSym^{(r)}$ spanned by all coloured quasisymmetric functions
naturally forms a $\QQ$-algebra.

\paragraph{The coloured descent set I.}
It is clear that the coloured quasisymmetric function $F_{\bm a}$ attached to
$\bm a=\sigma^\gamma$ (of length~$n$) only depends on the descent set
$\Des(\bm a)$ and on $\gamma$, rather than on the $r$-coloured permutation
itself.
Both $\Des(\bm a)$ and $\gamma$ can be extracted from the
\emph{coloured descent set} (cf.\ \cite[Definition 2.2]{AAER17}) of $\bm a$
which is defined, for $n > 0$, by
$$\sDes(\bm a)=\bigl\{i^{\gamma_i}: i \in[n-1], \, \gamma_i\neq\gamma_{i+1} \text{ or }
  (\gamma_i=\gamma_{i+1} \text{ and }\sigma_i>\sigma_{i+1})\bigr\} \cup
  \bigl\{n^{\gamma_n}\bigr\};$$
for $n = 0$, we let $\sDes(\bm a) = \varnothing$.
In any case, $\sDes(\bm a)$ is an $r$-coloured subset of $[n]$.

The coloured descent set of $\bm a$ keeps track of (a) the
position and colour of each coloured integer in $\bm a$ that precedes an
entry of a different colour, (b) the position and colour of each
monochromatic descent in $\bm a$, and (c) the position and colour of the
final entry of $\bm a$.
For our recurring example $\bm a = 5^12^23^01^0$, we have
$\sDes(\bm a) = \{ 1^1, 2^2, 3^0, 4^0\}$.
The first two elements (as listed) record colour changes and the third
element records a monochromatic descent.

We write $\mathbb S^{(r)}$ for the set of all $r$-coloured subsets of the set of
positive integers.
We define $\len(\varnothing) = 0$; for a non-empty $A\in \mathbb S^{(r)}$, say
$A = \{ a_1^{\gamma_1}, \dotsc, a_k^{\gamma_k}\}$ with $a_1 < \dotsb < a_k$, we
define $\len(A) = a_k$.
Each element of $\mathbb S^{(r)}$ is of the form $\sDes(\bm a)$ for some
$r$-coloured permutation $\bm a$;
note that $\lvert \bm a\rvert = \len(\sDes(\bm a))$.

Since $F_{\bm a} = F_{\bm b}$ whenever $\sDes(\bm a) = \sDes(\bm b)$,
we also write $F_{A}$ for the coloured quasisymmetric function $F_{\bm a}$
attached to any $r$-coloured permutation $\bm a$ with $\sDes(\bm a)=A$.
Given distinct $A_1,\dotsc,A_\ell \in \mathbb S^{(r)}$, the
coloured quasisymmetric functions $F_{A_1},\dotsc,F_{A_\ell}$ are
linearly independent.

\subsection{Coloured permutation statistics}

Let $\sigma = \sigma_1 \dotsb \sigma_n$ and $\pi = \pi_1 \dotsb \pi_n$ be
(uncoloured) permutations of the same length $n$.
That is, $(\sigma_1,\dotsc,\sigma_n)$ and $(\pi_1,\dotsc,\pi_n)$ are sequences
of distinct elements of $\Sigma$.
We say that $\sigma$ and $\pi$ have the \emph{same relative order} if
$\sigma_i \mapsto \pi_i$ defines a poset isomorphism from
$\{\sigma_1,\dotsc,\sigma_n\}$ onto $\{\pi_1,\dotsc,\pi_n\}$.
For example, $132$ and $275$ have the same relative order.

A \emph{coloured permutation statistic} is a function $\st$ defined on the set
of coloured permutations such that given a coloured permutation $\sigma^\gamma$,
if $\pi$ is a permutation of the same length as $\sigma$ and with the same
relative order, then $\st(\sigma^\gamma) = \st(\pi^\gamma)$.
Given coloured permutation statistics $\st_1,\dotsc,\st_k$, we regard the
tuple $(\st_1,\dotsc,\st_k)$ as a coloured permutation statistic via
$(\st_1,\dotsc,\st_k)(\bm a) = (\st_1(\bm a),\dotsc,\st_k(\bm a))$.
Given a coloured permutation $\bm a = \sigma^\gamma = \sigma_1^{\gamma_1}\dotsb \sigma_n^{\gamma_n}$,
let $\col_j(\bm a) := |\{i \in [n] : \gamma_i = j\}|$.
The \emph{colour vector} of an $\bm a$ is
$\bcol(\bm a) = (\col_0(\bm a),\dotsc,\col_{r-1}(\bm a))$;
this is a weak composition of $n$.
The functions $\bcol$, $\des$, and $\comaj$ are coloured permutation statistics.

\subsection{Coloured shuffle compatibility and shuffle algebras}
\label{ss:shuffles}

\paragraph{Shuffle compatibility.}
Let $\st$ be a coloured permutation statistic.
Following \cite{GZ18,Moustakas/th}, we say that $\st$ is \emph{shuffle compatible}
if for all disjoint coloured permutations $\bm a$ and $\bm b$, the multiset
$\{\!\{\st(\bm c) : \bm c \in \bm a\shuffle \bm b\}\!\}$
only depends on $\st(\bm a), \st(\bm b)$ and the lengths of $\bm a$ and $\bm b$.
Here, $\bm a\shuffle \bm b$ denotes the set of all coloured permutations
obtained as shuffles of $\bm a$ and $\bm b$.

\paragraph{Shuffle algebras.}
Generalising \cite{GZ18, Moustakas/th},
we associate a \emph{shuffle algebra} $\A_{\st}^{(r)}$ over $\QQ$ to
a shuffle-compatible coloured permutation statistic $\st$ as follows.
First, $\st$ defines an equivalence relation $\sim_{\st}$ on $r$-coloured permutations via
$\bm a\sim_{\st} \bm b$ if and only if
$\bm a$ and $\bm b$ have the same length and
$\st(\bm a)=\st(\bm b)$; we refer to this as
\emph{$\st$-equivalence}.
We write $[\bm a]_{\st}$ to denote the $\st$-equivalence class of
$\bm a$.
As a $\QQ$-vector space $\A_{\st}^{(r)}$ has a basis given by the
$\st$-equivalence classes of $r$-coloured permutations.
The multiplication is given by linearly extending the rule
$$
  [\bm a]_{\st} \, [\bm b]_{\st} = \sum_{\bm c \in \bm a
    \shuffle\bm b} [\bm c]_{\st},
$$where $\bm a$ and $\bm b$ are $r$-coloured permutations on disjoint sets of symbols.
(Thanks to the shuffle compatibility of $\st$, this yields a well-defined
multiplication on $\A_{\st}^{(r)}$.)

Let $\st$ and $\st'$ be shuffle compatible. Suppose that $\st$ \emph{refines}
$\st'$ in the sense that $\st'(\bm a) = \st'(\bm b)$ whenever $\st(\bm a) =
  \st(\bm b)$.
Then the rule $[\bm a]_{\st}\mapsto [\bm a]_{\st'}$ defines a surjective
$\QQ$-algebra homomorphism $\A^{(r)}_{\st} \onto \A^{(r)}_{\st'}$.

\paragraph{The coloured descent set II.}
The following is a restatement of \cite[Eqn (3.4)]{HP/10} (see
also \cite[Thm~4.2.4]{Moustakas/th}). It implies that the coloured descent
set $\sDes$ is shuffle compatible and allows us to identify $\QSym^{(r)}$ and
$\A^{(r)}_{\sDes}$.

\begin{thm}
  Let $\bm a$ and $\bm b$ be symbol-disjoint coloured permutations.
  Let $\sDes(\bm a)=A$, $\sDes(\bm b)=B$, and $n=|\bm a|+|\bm b|$.
  For an $r$-coloured subset $C$ of $[n]$,
  let $\nu_{A,B}^C$ be the number of $r$-coloured permutations ${\bm c}\in
    {\bm a}\shuffle {\bm b}$ such that $\sDes(\bm c)=C$.
  Then
  \begin{equation*}
    F_A F_B = \sum_C \nu_{A,B}^CF_C,
  \end{equation*}
  where $C$ runs over all $r$-coloured subsets of $[n]$. Equivalently,
  \begin{equation*}
     F_{\bm a} F_{\bm b} = \sum_{\bm c \in   {\bm a}\shuffle {\bm b}} F_{\bm c}.
    \end{equation*}
\end{thm}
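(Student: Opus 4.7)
The strategy is to prove the stronger pointwise statement
$$F_{\bm a} F_{\bm b} = \sum_{\bm c \in \bm a \shuffle \bm b} F_{\bm c}$$
for symbol-disjoint coloured permutations $\bm a$ and $\bm b$, and then read off the theorem by collecting terms according to the value of $\sDes(\bm c)$. Since the right-hand side only depends on $\sDes(\bm a)$, $\sDes(\bm b)$, $|\bm a|$, $|\bm b|$ through the coefficients $\nu_{A,B}^C$, the identity also yields shuffle compatibility of $\sDes$ as a byproduct.

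The plan is to argue by direct expansion in the monomial basis of $\QQ\llbracket \bx^{(0)}, \dotsc, \bx^{(r-1)}\rrbracket$. I would fix $\bm a = \sigma^\gamma$ of length $p$ and $\bm b = \tau^\delta$ of length $q$, write out $F_{\bm a}$ and $F_{\bm b}$ via~\eqref{eq:qsym_def_color}, and note that every term in $F_{\bm a} F_{\bm b}$ is a product of monomials
$$
x_{i_1}^{(\gamma_1)}\cdots x_{i_p}^{(\gamma_p)}\cdot x_{j_1}^{(\delta_1)}\cdots x_{j_q}^{(\delta_q)}
$$
where the two index sequences are weakly increasing and strictly increasing at positions in $\Des^*\!(\bm a)$ and $\Des^*\!(\bm b)$, respectively.

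The core step is to set up a bijection between such pairs of sequences and pairs $(\bm c, (k_1,\dotsc,k_n))$, where $\bm c \in \bm a \shuffle \bm b$ and $(k_l)$ indexes a monomial of $F_{\bm c}$. Given a pair of sequences, I would interleave them into a single weakly increasing sequence, breaking ties $i_s = j_t$ in a canonical way: if the attached colours $\gamma_s$ and $\delta_t$ differ, the colour order on the associated symbols $\sigma_s^{\gamma_s}$ and $\tau_t^{\delta_t}$ dictates the order; if the colours coincide, I would place the symbol from $\bm a$ first. The interleaving determines a shuffle $\bm c$, whose underlying symbol sequence in positions with matching index form blocks on which the colour-order convention coincides with the relative order in $\bm c$. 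Conversely, from any shuffle $\bm c$ and any valid monomial for $F_{\bm c}$, restricting the index sequence to the $\bm a$- and $\bm b$-positions produces a pair of valid sequences that re-merges to the same monomial.

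The main obstacle is checking that the merged sequence really satisfies the strictness condition imposed by $\Des^*\!(\bm c)$, and conversely that no spurious strict inequalities are forced. This amounts to a case analysis at each index $l$: when consecutive shuffled entries come from the same side (say both from $\bm a$), strictness is inherited from the summand conditions on $\Des^*\!(\bm a)$; when they come from opposite sides, a strict inequality is forced precisely when the tie-breaking rule would have reversed their positions, and this aligns with the definition of $\sDes$ (which puts $l \in \sDes(\bm c)$ exactly when the colours differ or the symbol colour-order descends). Once this case-check is carried out, the bijection is established, the pointwise identity follows, and grouping monomials by $\sDes(\bm c)$ yields the formula with coefficients $\nu_{A,B}^C$.
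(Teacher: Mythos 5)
Your overall strategy---expanding $F_{\bm a}F_{\bm b}$ in the monomial basis and merging the two index sequences into a single sequence indexed by a shuffle---is the standard coloured $P$-partition argument; note that the paper does not prove this statement itself but cites \cite{HP/10} and \cite{Moustakas/th}, where essentially this argument is carried out. So the approach is right, but your tie-breaking rule is incorrect in the equal-colour case, and this breaks the bijection. Take $\bm a = 2^0$ and $\bm b = 1^0$. The two shuffles are $1^02^0$ (with $\Des^* = \varnothing$, so its monomials are $x_i^{(0)}x_j^{(0)}$ with $i\le j$) and $2^01^0$ (with $\Des^* = \{1\}$, so its monomials require a \emph{strict} increase). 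A diagonal term of $F_{\bm a}F_{\bm b}$ with $i_1 = j_1$ must therefore be matched with the shuffle $1^02^0$, i.e.\ the \emph{smaller} letter must go first; your rule ``colours coincide $\Rightarrow$ place the symbol from $\bm a$ first'' sends it to $2^01^0$, where the resulting index sequence violates the strictness condition, and symmetrically the diagonal monomials of $F_{1^02^0}$ are never hit by your map. So as stated the forward map does not land in the set of valid summands of $F_{\bm c}$ and the two maps are not mutually inverse.

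The fix is to use a single uniform rule: at a tie $i_s=j_t$, place first whichever of $\sigma_s^{\gamma_s}$ and $\tau_t^{\delta_t}$ is smaller in the colour order (when $\gamma_s\ne\delta_t$ this agrees with what you wrote; when $\gamma_s=\delta_t$ it means the smaller \emph{symbol} goes first, regardless of which of $\bm a,\bm b$ it comes from). This rule is forced, because equal consecutive indices in a summand of $F_{\bm c}$ are permitted exactly at positions outside $\Des^*(\bm c)$, i.e.\ where consecutive coloured letters of $\bm c$ ascend in the colour order. With this correction the rest of your outline goes through: within a block of equal index values the letters contributed by $\bm a$ alone (and by $\bm b$ alone) are already increasing in the colour order, since equal consecutive indices inside one factor can only occur at non-descents; hence sorting the block by the colour order preserves the relative orders of $\bm a$ and $\bm b$, the merged word is genuinely a shuffle, and restriction followed by re-merging recovers the same $\bm c$. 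Grouping the resulting pointwise identity by $\sDes(\bm c)$ then gives the stated formula.
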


\begin{cor}[{Cf.\ \cite[Thm 4.4.1]{Moustakas/th}}]
  \label{cor:triple}
  \quad
  \begin{enumerate}
    \item The coloured descent set $\sDes$ is shuffle compatible.
    \item The linear map on $\A^{(r)}_{\sDes}$ defined by $$[\bm a]_{\sDes}
            \mapsto F_{\bm a}$$ is a $\QQ$-algebra isomorphism $\A^{(r)}_{\sDes} \to \QSym^{(r)}$.
          \qed
  \end{enumerate}
\end{cor}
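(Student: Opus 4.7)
The plan is to derive both parts of Corollary~\ref{cor:triple} directly from the product formula in the preceding theorem, together with one external input: the linear independence of the fundamental coloured quasisymmetric functions $\{F_A : A \in \mathbb S^{(r)}\}$ inside $\QSym^{(r)}$. This is standard for Poirier's functions (see e.g.\ \cite{Poirier,HP/10,Moustakas/th}) and I would use it as a black box.

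For part (i), fix disjoint coloured permutations $\bm a,\bm b$ with $A = \sDes(\bm a)$, $B = \sDes(\bm b)$, and set $\nu^C := \#\{\bm c\in \bm a\shuffle \bm b : \sDes(\bm c) = C\}$. The multiset $\{\!\{\sDes(\bm c) : \bm c \in \bm a\shuffle \bm b\}\!\}$ is precisely the data of the numbers $\nu^C$. By the preceding theorem, $F_A F_B = \sum_C \nu^C F_C$. The left-hand side depends only on $A,B$, and the coefficients in any expansion in the family $\{F_C\}$ are unique by linear independence. Hence each $\nu^C$ depends only on $A$ and $B$. Since $|\bm a| = \len(\sDes(\bm a))$ (and similarly for $\bm b$), this is the required form of shuffle compatibility.

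For part (ii), shuffle compatibility equips $\A^{(r)}_{\sDes}$ with a $\QQ$-algebra structure whose basis is indexed by $\sDes$-equivalence classes, which are parametrised by $\mathbb S^{(r)}$ via $[\bm a]_{\sDes} \leftrightarrow \sDes(\bm a)$. Because $F_{\bm a}$ depends only on $\sDes(\bm a)$ (as observed after~\eqref{eq:qsym_def_color}), the rule $[\bm a]_{\sDes} \mapsto F_{\bm a}$ extends $\QQ$-linearly to a well-defined map $\Phi \colon \A^{(r)}_{\sDes} \to \QSym^{(r)}$. Surjectivity is immediate from the definition of $\QSym^{(r)}$, and injectivity follows from the linear independence of $\{F_A\}_A$. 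To check multiplicativity, choose representatives $\bm a,\bm b$ on disjoint symbol sets (always possible: relabelling symbols preserves $\sDes$) and compute
\[
\Phi\bigl([\bm a]_{\sDes}[\bm b]_{\sDes}\bigr) = \sum_{\bm c \in \bm a\shuffle \bm b} F_{\bm c} = \sum_C \nu^C F_C = F_A F_B = \Phi([\bm a]_{\sDes})\,\Phi([\bm b]_{\sDes}),
\]
using the definition of the product in $\A^{(r)}_{\sDes}$ and the preceding theorem.

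The only conceptual obstacle is the linear independence of the $F_A$; everything else is bookkeeping and one direct application of the product formula. In a fully self-contained writeup I would note that independence can be proved as in the uncoloured case by producing, for each $A \in \mathbb S^{(r)}$, a monomial appearing in $F_A$ but in no $F_{A'}$ with $A' \neq A$ (extracted from the strictly increasing tuples $i_1 < i_2 < \dotsb < i_n$ in~\eqref{eq:qsym_def_color} with the colour vector read off from $A$); this observation could be tucked into a brief remark rather than repeated in full.
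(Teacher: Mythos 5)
Your proposal is correct and matches what the paper intends: Corollary~\ref{cor:triple} carries a \qed{} in its statement because it is regarded as an immediate consequence of the preceding product formula $F_AF_B=\sum_C\nu_{A,B}^CF_C$ together with the standard fact that the $F_A$, $A\in\mathbb S^{(r)}$, form a basis of $\QSym^{(r)}$, and your write-up simply spells out that derivation. One small caveat: your parenthetical sketch of the linear independence is imprecise --- the monomial with $i_1<i_2<\dotsb<i_n$ strictly increasing appears in \emph{every} $F_{A'}$ with the same length and colour word, so no single monomial is unique to $F_A$; the correct argument is the usual unitriangular expansion of the $F_A$ in the monomial coloured quasisymmetric functions (indices increasing exactly at the positions recorded in $A$), which is what the cited sources use, so the black-boxed fact itself stands.
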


\section{Descent statistics and coloured shuffle compatibility}
\label{s:coloured}

As in Section~\ref{s:qsym}, we assume that all coloured permutations, sets, etc.\ are
$r$-coloured for some arbitrary but fixed $r\gg 0$.

\subsection{Coloured descent statistics}

As a coloured version of \cite[\S 2.1]{GZ18}, we say
that a coloured permutation statistic $\st$ is a
\emph{coloured descent statistic} if for all coloured permutations $\bm a$ and
$\bm b$, we have $\st(\bm a) = \st(\bm b)$ whenever $\sDes(\bm
  a) = \sDes(\bm b)$.
For $A\in \mathbb S^{(r)}$, we may then unambiguously define
$\st(A) := \st(\bm a)$ where $\bm a$ is any $r$-coloured
permutation with $\sDes(\bm a) = A$.
We write $\st(\mathbb S^{(r)}) = \{ \st(A) : A\in \mathbb
  S^{(r)}\}$.

The following result and its proof constitute a coloured variant of \cite[Thm
4.3]{GZ18} (cf.\ also \cite[Lemma~4.4.4]{Moustakas/th}).
In Theorem~\ref{thm:triple_compatibility},
we will use it to prove that $(\des,\comaj,\bcol)$ is shuffle compatible.

\begin{lem}\label{lem:shuffle_quotient}
  Let $\st$ be a coloured descent statistic.
  Then $\st$ is shuffle compatible if and only
  if there exist
  a $\QQ$-algebra $\mathcal B$
  and a $\QQ$-algebra homomorphism $\phi\colon  \QSym^{(r)} \to
    \mathcal B$ with the following properties.
  \begin{enumerate}[(i)]
    \item \label{lem:shuffle_quotient1}
          For all $A,B\in \mathbb S^{(r)}$ with $\st(A) = \st(B)$ and $\len(A) =
            \len(B)$ (see Section~\ref{ss:qsym_sdes}), we have $\phi(F_A) = \phi(F_B)$.
    \item \label{lem:shuffle_quotient2}
          For each (finite or infinite) sequence $A_1,A_2,\dotsc \in \mathbb S^{(r)}$
          such that the $(\st(A_i),\len(A_i))$ are pairwise distinct, the images
          $\phi(F_{A_1}), \phi(F_{A_2}),\dotsc$ are $\QQ$-linearly independent.
  \end{enumerate}
  In this case, the rule $[\bm a]_{\st}\mapsto
    \phi(F_{\bm a})$ yields an injective $\QQ$-algebra homomorphism
  $\mathcal A^{(r)}_{\st}\to \mathcal B$.
\end{lem}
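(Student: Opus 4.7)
The plan is to follow the strategy of the uncoloured analog \cite[Thm~4.3]{GZ18}, using Corollary~\ref{cor:triple} as the bridge between $\QSym^{(r)}$ and the coloured descent shuffle algebra $\A^{(r)}_{\sDes}$. In both directions, the natural candidate for $\mathcal B$ is $\A^{(r)}_{\st}$ itself, and I expect the final embedding to fall out of the same construction used to produce $\phi$.

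For the forward direction, suppose $\st$ is shuffle compatible. Since $\st$ is a coloured descent statistic, $\sDes$ refines $\st$, so the remark in Section~\ref{ss:shuffles} furnishes a surjective $\QQ$-algebra homomorphism $\A^{(r)}_{\sDes} \onto \A^{(r)}_{\st}$, $[\bm a]_{\sDes} \mapsto [\bm a]_{\st}$. Precomposing with the isomorphism $\QSym^{(r)} \to \A^{(r)}_{\sDes}$ of Corollary~\ref{cor:triple}(ii) produces $\phi\colon \QSym^{(r)} \to \A^{(r)}_{\st} =: \mathcal B$. Property~(i) holds because $\phi(F_A) = [\bm a]_{\st}$ for any $\bm a$ with $\sDes(\bm a) = A$, and the $\st$-equivalence class of $\bm a$ depends only on $(\st(A),\len(A))$. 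Property~(ii) is then automatic, since the $\st$-equivalence classes form a $\QQ$-basis of $\A^{(r)}_{\st}$ indexed precisely by the occurring pairs $(\st(\bm a),\lvert\bm a\rvert)$.

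The main content is the backward direction. Given $\mathcal B$ and $\phi$ as in the statement, and disjoint coloured permutations $\bm a, \bm b$, I would expand
\[
\phi(F_{\bm a})\,\phi(F_{\bm b}) = \phi(F_{\bm a}F_{\bm b}) = \sum_{\bm c \in \bm a \shuffle \bm b} \phi(F_{\bm c}),
\]
using multiplicativity of $\phi$ and the product rule recalled before Corollary~\ref{cor:triple}. All shuffles $\bm c$ share the common length $n := \lvert\bm a\rvert + \lvert\bm b\rvert$, so grouping by $\st(\bm c)$ and using~(i) to pick a canonical representative $A_s \in \mathbb S^{(r)}$ with $\st(A_s) = s$ and $\len(A_s) = n$ rewrites the right-hand side as $\sum_s N_s(\bm a,\bm b)\,\phi(F_{A_s})$, where $N_s(\bm a,\bm b)$ counts the shuffles with $\st$-value $s$. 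Any disjoint pair $\bm a',\bm b'$ with matching $\st$-values and lengths gives the same left-hand side by~(i); the linear independence in~(ii) applied to the family $\{A_s\}_s$ (which has pairwise distinct $(\st,\len)$-pairs by construction) then forces $N_s(\bm a,\bm b) = N_s(\bm a',\bm b')$ for every $s$, which is exactly the shuffle compatibility of $\st$.

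The remaining embedding assertion is bookkeeping: the rule $[\bm a]_{\st} \mapsto \phi(F_{\bm a})$ is well-defined by~(i), multiplicative by the displayed identity above, and injective by~(ii) applied to a system of representatives for distinct $\st$-equivalence classes. The subtle point I expect to require care throughout is the coupling between the statistic $\st$ and the length $\len$: $\st$-equivalence of coloured permutations involves both, and hypothesis~(ii) must be invoked on families indexed by the pair $(\st,\len)$ rather than by $\st$ alone, both when extracting the multiset of $\st$-values from an algebraic identity and when checking injectivity of the final embedding.
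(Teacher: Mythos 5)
Your proposal is correct and follows essentially the same route as the paper's proof: the forward direction via the surjection $\A^{(r)}_{\sDes}\onto\A^{(r)}_{\st}$ composed with the isomorphism of Corollary~\ref{cor:triple}, the converse by expanding $\phi(F_{\bm a})\phi(F_{\bm b})=\sum_{\bm c\in\bm a\shuffle\bm b}\phi(F_{\bm c})$ and using (i)--(ii) to recover the multiset of $\st$-values, and the final embedding by factoring $\phi$ through $\phi_{\st}$. Your explicit grouping by canonical representatives $A_s$ in the converse, and your emphasis on indexing by the pair $(\st,\len)$, merely spell out details the paper leaves implicit.
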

\begin{proof}
  If $\st$ is shuffle compatible, then
  Corollary~\ref{cor:triple} yields a homomorphism
  \begin{equation}
    \label{eq:phi_st}
    \phi_{\st}\colon \QSym^{(r)} \xrightarrow{\cong} \mathcal A^{(r)}_{\sDes}
    \onto \mathcal A^{(r)}_{\st}
  \end{equation}%
  such that $\phi_{\st}(F_{\bm a})= [\bm a]_{\st}$ for
  each coloured permutation $\bm a$.
  By construction, $\phi_{\st}$ has the desired properties.
  Conversely, suppose that $\phi$ satisfies \ref{lem:shuffle_quotient1}--\ref{lem:shuffle_quotient2}.
  Let $\bm a$ and $\bm b$ be (symbol-)disjoint coloured permutations.
  Using Corollary~\ref{cor:triple}, we obtain
  \begin{equation}
    \label{eq:recover}
    \phi(F_{\bm a})\phi(F_{\bm b}) = \phi(F_{\bm a} F_{\bm b})
    = \phi\left(\sum_{\bm c \in \bm a \shuffle \bm b} F_{\bm c}\right) = \sum_{\bm c \in
      \bm a\shuffle \bm b} \phi(F_{\bm c}).
  \end{equation}%
  Conditions \ref{lem:shuffle_quotient1}--\ref{lem:shuffle_quotient2} allow us to
  recover the multiset $\{\!\{\st(\bm c) : \bm c \in \bm a\shuffle \bm
    b\}\!\}$ from the quadruple $(\st(\bm a), \st(\bm b), \lvert \bm a\rvert,
    \lvert \bm b\rvert)$.
  Indeed, the multiplicity of $\st(\bm c)$ in our multiset is the coefficient
  of $\phi(F_{\bm c})$ in \eqref{eq:recover}.
  We conclude that $\st$ is shuffle compatible.

  Finally, let $\st$ be shuffle compatible and let $\phi$ with
  \ref{lem:shuffle_quotient1}--\ref{lem:shuffle_quotient2} be given.
  Define $\phi_{\st}$ as in~\eqref{eq:phi_st}.
  Then $\phi_{\st}(F_{\bm a}) = \phi_{\st}(F_{\bm b})$ if and only if $\lvert\bm
    a\rvert = \lvert\bm b\rvert$ and $\st(\bm a) = \st(\bm b)$
  if and only if $\phi(F_{\bm a}) = \phi(F_{\bm b})$.
  The kernel of $\phi_{\st}$ is generated by all $F_{\bm a}-F_{\bm b}$ with
  $\lvert\bm a\rvert = \lvert\bm b\rvert$ and $\st(\bm a) = \st(\bm b)$.
  In particular, $\Ker(\phi_{\st})\subseteq \Ker(\phi)$ and
  the rule $[\bm a]_{\st} \mapsto \phi(F_{\bm a})$ indeed defines a
  $\QQ$-algebra homomorphism $\psi\colon \A^{(r)}_{\st} \to \mathcal B$ with $\phi =
    \psi \circ \phi_{\st}$.
  By~\ref{lem:shuffle_quotient2}, $\psi$ is injective.
\end{proof}

\subsection{The shuffle algebra of $(\des,\comaj,\bcol)$}
\label{ss:A_des_comaj_p}

The main shuffle algebra of interest to us
is the one attached to $(\des,\comaj,\bcol)$. The latter statistic is
naturally refined by the coloured descent set $\sDes$.
Let $p_0,\dotsc,p_{r-1},x,t$ be commuting variables over $\QQ$.
Write $\bp = (p_0,\dotsc,p_{r-1})$ and $\bp^{\bm v} = p_0^{v_0}\dotsb
p_{r-1}^{v_{r-1}}$.
For a ring $R$, let $R\llbracket t\Asterisk\rrbracket$ denote the ring
$R\llbracket t\rrbracket$ with multiplication given by the Hadamard product in~$t$.
Equivalently, by identifying a formal power series and its sequence of
coefficients, we may identify $R\llbracket t\Asterisk\rrbracket$ and the
product ring $R^{\mathbb N_0}$.

\begin{thm}\label{thm:triple_compatibility}
  \quad
  \begin{enumerate}[(i)]
    \item\label{thm:triple_compatibility1}
          The tuple of statistics $(\des,\comaj,\bcol)$ is shuffle compatible.
    \item\label{thm:triple_compatibility2}
          The linear map $H\colon \A_{(\des,\comaj,\bcol)}^{(r)} \to \QQ[\bm p, x]
            \llbracket t\Asterisk\rrbracket$ defined by
          \begin{equation}
            \label{eq:triple_compatibility}
            [\bm a]_{(\des,\comaj,\bcol)} \mapsto
            \frac{\bp^{\bcol(\bm a)} x^{\comaj(\bm a)} t^{\des(\bm a)}
            }{(1-t)(1-xt)\cdots(1-x^{\lvert \bm a\rvert}t)}
          \end{equation}
          is an injective algebra homomorphism.
  \end{enumerate}
\end{thm}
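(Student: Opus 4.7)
The plan is to obtain both parts of the theorem at once by applying Lemma~\ref{lem:shuffle_quotient}. First I would verify that $(\des,\comaj,\bcol)$ is a coloured descent statistic: given $A = \sDes(\bm a)$, the colours are constant on the runs between consecutive elements of $A$ (which determines $\bcol$), and from this datum one reads off $\Des(\bm a)$ as $\{i \in [n-1] : i^{\gamma_i} \in A \text{ and } \gamma_i \le \gamma_{i+1} \text{ in } \ZZ\}$, adjoining~$0$ when $\gamma_1 \ne 0$; hence also $\des$ and $\comaj$. It therefore suffices to exhibit a $\QQ$-algebra homomorphism $\phi\colon \QSym^{(r)} \to \mathcal B := \QQ[\bm p, x]\llbracket t\Asterisk\rrbracket$ that satisfies the two conditions of the lemma and sends $F_{\bm a}$ to the right-hand side of~\eqref{eq:triple_compatibility}; the injective $\QQ$-algebra homomorphism produced by the lemma will then be~$H$.

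The construction of $\phi$ exploits the fact that the Hadamard product on $\mathcal B$ is coefficient-wise in~$t$: any family of $\QQ$-algebra homomorphisms $\psi_k\colon \QSym^{(r)} \to \QQ[\bm p, x]$ assembles into a $\QQ$-algebra homomorphism $\phi := \sum_{k \ge 0} t^k \psi_k\colon \QSym^{(r)} \to \mathcal B$. I would take
\[
  \psi_k(x_i^{(j)}) \;=\; \begin{cases} p_j\, x^{i-1}, & \text{if } 1 \le i \le k+1 \text{ and } (j = 0 \text{ or } i \ge 2),\\ 0, & \text{otherwise}.\end{cases}
\]
The crucial asymmetry is that $\psi_k(x_1^{(j)}) = 0$ for $j \ne 0$: this vanishing will produce the extra factor $x^n t$ needed to reconcile $\des$ and $\comaj$ (which are defined using $\Des$) with the set $\Des^*$ appearing in the definition of $F_{\bm a}$, i.e.\ to account for the contribution of $0 \in \Des(\bm a)$ in the case $\gamma_1 \ne 0$.

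The main technical step---and, I expect, the principal obstacle---is then to establish the coloured bivariate Gessel-type identity
\[
  \psi_k(F_{\bm a}) \;=\; \bp^{\bcol(\bm a)}\, x^{\comaj(\bm a)}\, \binom{n+k-\des(\bm a)}{k-\des(\bm a)}_{x}
\]
for every coloured permutation $\bm a$ of length $n$ and every $k \ge \des(\bm a)$ (and $0$ for $k < \des(\bm a)$). Summing against $t^k$ and invoking the classical $q$-binomial identity $\sum_{k \ge 0} \binom{n+k-d}{k-d}_{x} t^k = t^d/\prod_{j=0}^n(1-x^jt)$ then turns this into~\eqref{eq:triple_compatibility}. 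I would prove the identity by splitting on $\gamma_1$. When $\gamma_1 = 0$, the constraint $i_l \ge 2$ imposed by $\psi_k$ at positions $l$ with $\gamma_l \ne 0$ is automatic: if $l_0$ is the smallest index with $\gamma_{l_0} \ne 0$, then $\gamma_{l_0 - 1} = 0$, and the colour order gives $\sigma_{l_0-1}^{0} > \sigma_{l_0}^{\gamma_{l_0}}$, so $l_0 - 1 \in \Des^*(\bm a)$, forcing the strict ascent $i_{l_0 - 1} < i_{l_0}$ and hence $i_l \ge 2$ for all $l \ge l_0$; one then recovers the standard principal specialisation of $F_{\bm a}$, and $\Des = \Des^*$ closes the case. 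When $\gamma_1 \ne 0$, the vanishing $\psi_k(x_1^{(\gamma_1)}) = 0$ enforces $i_1 \ge 2$ throughout the summation; a single shift $i_l \mapsto i_l - 1$ reduces to the previous case with $k$ in place of $k+1$ variables, producing exactly the extra factor $x^n t$ that matches the $n$ and $1$ contributed by $0 \in \Des(\bm a)$ to $\comaj$ and $\des$, respectively.

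The hypotheses of Lemma~\ref{lem:shuffle_quotient} are now immediate. Condition~\ref{lem:shuffle_quotient1} holds because $\phi(F_A)$ depends on $A \in \mathbb S^{(r)}$ only through $(\bcol(A), \comaj(A), \des(A), \len(A))$. For condition~\ref{lem:shuffle_quotient2}, any vanishing linear combination $\sum c_i \phi(F_{A_i}) = 0$ can be resolved by first separating the distinct $\bm p$-monomials (indexed by $\bcol$), which forces each homogeneous component to vanish; within such a component all $A_i$ share the common denominator $\prod_{j=0}^n(1-x^jt)$ (where $n$ is determined by the colour vector), so clearing it reduces the problem to linear independence of the pairwise distinct monomials $x^{\comaj(A_i)} t^{\des(A_i)}$. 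Lemma~\ref{lem:shuffle_quotient} now yields both the shuffle compatibility of $(\des,\comaj,\bcol)$ and the injectivity of $H$, establishing \ref{thm:triple_compatibility1} and~\ref{thm:triple_compatibility2}.
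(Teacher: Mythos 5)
Your proposal is correct and follows essentially the same route as the paper: your $\psi_k$ is exactly the paper's specialisation $\psi_m$ (with $m=k+1$), your $\phi=\sum_k t^k\psi_k$ is its map $\Psi$, your $q$-binomial identity is equivalent to the paper's Proposition~\ref{prop:qsym_specialization_color}, and both arguments conclude via Lemma~\ref{lem:shuffle_quotient}. The only cosmetic difference is in the linear-independence step, where you separate $\bm p$-monomials and clear the common denominator rather than running the paper's minimal-counterexample/leading-term argument.
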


Our proof of Theorem~\ref{thm:triple_compatibility} will be based on the
following strategy.
First, we construct a sequence of judicious specialisation homomorphisms $\psi_m$
defined on $\QSym^{(r)}$.
Next, these $\psi_m$ can be combined to form a single homomorphism
$\QSym^{(r)} \to \QQ[\bm p, x]\llbracket t\Asterisk\rrbracket$ which sends
each $F_{\bm a}$ to the right-hand side of \eqref{eq:triple_compatibility}.
Both parts of Theorem~\ref{thm:triple_compatibility} then follow by
invoking Lemma~\ref{lem:shuffle_quotient}.
This strategy and its execution are inspired by \cite[Thm~4.5]{GZ18} and
\cite[Thm~4.4.3]{Moustakas/th}.
In particular, our Proposition~\ref{prop:qsym_specialization_color}
is analogous to \cite[Eqn~(2.12)]{Moustakas/th}.
(For related but coarser results, see \cite{moustakas21,moustakas21b}.)
Our Theorem~\ref{thm:triple_compatibility} can be seen as a refined version of
\cite[Thm~4.4.3]{Moustakas/th}.
We note that using a suitable coloured version of \cite[Lemma~3.6]{GZ18}, we
could exchange the comajor indices in the present article for the major
indices used in \cite{Moustakas/th}, thus obtaining results that are more
direct refinements of those from \cite{Moustakas/th}.

Let $\psi_m\colon \QSym^{(r)} \to \QQ[{\bm p},x]$ be the specialisation
defined by the substitution
\[
  \begin{cases}
    x_i^{(0)} \gets x^{i-1}p_0, & \quad \text{if $1 \le i \le m$},                     \\
    x_i^{(j)} \gets x^{i-1}p_j, & \quad \text{if $1 < i \le m$ and $1 \le j \le r-1$}, \\
    x_i^{(j)} \gets 0,          & \quad \text{otherwise,}
  \end{cases}
\]
where the notation ``$y \gets z$'' indicates that $y$ is to be replaced with $z$.
This is well defined since all but finitely many variables are sent to zero.
Note that each $\psi_m$ is an algebra homomorphism.
Further note that $\psi_m(x_i^{(j)}) = 0$ if and only if $i > m$ or if $i = 1$
and $j > 0$.
Define a $\QQ$-linear map $\Psi\colon \QSym^{(r)}\to \QQ[\bp, x]\llbracket t\Asterisk\rrbracket$ via
$$
  \Psi(F_{\bm a}) = \sum_{m=1}^\infty \psi_m(F_{\bm a})t^{m-1}.
$$%
Clearly, $\Psi$ is an algebra homomorphism.

\begin{pro}
  \label{prop:qsym_specialization_color}
  For each $r$-coloured permutation $\bm a$ with $\lvert \bm a\rvert =n$, we have
  \begin{align}
    \Psi(F_{\bm a})
     & =  \frac{ \bp^{\bcol(\bm a)}x^{\comaj(\bm a)}t^{\des(\bm a)}}{(1-t)(1-xt)\cdots(1-x^nt)}.  \label{eq:descomaj_color}
  \end{align}
\end{pro}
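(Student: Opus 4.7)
The plan is to compute $\psi_m(F_{\bm a})$ explicitly and then sum the series $\sum_{m \ge 1} \psi_m(F_{\bm a}) t^{m-1}$ in closed form. Write $\bm a = \sigma_1^{\gamma_1} \cdots \sigma_n^{\gamma_n}$. Unpacking $\psi_m$ applied to \eqref{eq:qsym_def_color}, the monomial $x_{i_1}^{(\gamma_1)} \cdots x_{i_n}^{(\gamma_n)}$ survives only when $i_k \le m$ for every $k$ and, additionally, $i_k \ge 2$ whenever $\gamma_k \neq 0$; when it survives, it contributes $\bp^{\bcol(\bm a)} x^{(i_1 - 1) + \cdots + (i_n - 1)}$, so $\bp^{\bcol(\bm a)}$ factors out of the entire sum.

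The crucial step is a change of variables $j_k := i_k - c_k$, where
$$c_k := [\gamma_1 \neq 0] + \bigl|\{ j \in \Des^*(\bm a) : j < k \}\bigr|.$$
The identity $c_k - c_{k-1} = [k-1 \in \Des^*(\bm a)]$ absorbs the strict inequalities coming from $\Des^*(\bm a)$, and $c_1 = [\gamma_1 \neq 0]$ absorbs the constraint $i_1 \ge 2$ when $\gamma_1 \neq 0$. The delicate point is the constraint $i_k \ge 2$ at later positions $k > 1$ with $\gamma_k \neq 0$: I claim $c_k \ge 1$ must already hold. Indeed, if $k' > 1$ is the least index with $\gamma_{k'} \neq 0$, then $\gamma_{k'-1} = 0 < \gamma_{k'}$ in $\ZZ$, so the colour order yields $\sigma_{k'-1}^{\gamma_{k'-1}} > \sigma_{k'}^{\gamma_{k'}}$, forcing $k' - 1 \in \Des^*(\bm a)$ and hence $c_{k'} \ge 1$. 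Monotonicity propagates this to all $k \ge k'$. A short induction then confirms that $(i_k) \mapsto (j_k)$ is a bijection from admissible tuples onto chains $1 \le j_1 \le \cdots \le j_n \le m - c_n$.

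Direct bookkeeping gives $c_n = \des(\bm a)$ and $\sum_{k=1}^n c_k = \comaj(\bm a)$, the latter because the summand $n \cdot [\gamma_1 \neq 0]$ matches exactly the contribution of $0 \in \Des(\bm a)$ to $\comaj$. After extracting the factor $\bp^{\bcol(\bm a)} x^{\comaj(\bm a) - n} t^{\des(\bm a)}$, substituting $M = m - \des(\bm a)$ and $y_k = j_k - 1$, the claim reduces to the classical identity
$$\sum_{M \ge 0} t^M \binom{M+n}{n}_x = \prod_{k=0}^n \frac{1}{1 - x^k t},$$
where $\binom{M+n}{n}_x = \sum_{0 \le y_1 \le \cdots \le y_n \le M} x^{y_1 + \cdots + y_n}$; this is verified by the substitution $z_0 = M - y_n$, $z_k = y_{n-k+1} - y_{n-k}$, $z_n = y_1$, which turns the left-hand side into $\prod_{k=0}^n \sum_{z_k \ge 0} (x^k t)^{z_k}$.

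The main obstacle I anticipate is the colour-order observation in the second paragraph: it is precisely the compatibility between the descent structure and the vanishing of $x_1^{(j)}$ for $j \neq 0$ under each $\psi_m$ that allows the single shift $c_k$ to simultaneously enforce the strict-descent constraints and the condition $i_k \ge 2$ for $\gamma_k \neq 0$. Without this observation, one would be forced into a more complicated two-parameter shift and the subsequent reduction to the $q$-series identity would not be clean.
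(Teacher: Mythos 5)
Your proof is correct and follows essentially the same route as the paper's: your shift $c_k$ is exactly the paper's $i_k' = i_k - \chi_0 - \chi_1 - \cdots - \chi_{k-1}$ (the paper encodes $[\gamma_1\neq 0]$ as the descent at position $0$ via the convention $i_0=1$), and the closing summation is the same Gaussian-binomial generating-function identity. The only blemish is a harmless off-by-one (after setting $y_k=j_k-1$ the upper bound is $M=m-1-\des(\bm a)$, not $m-\des(\bm a)$); on the plus side, your colour-order argument for why $i_k\ge 2$ is automatic at positions $k>1$ with $\gamma_k\neq 0$ makes explicit a step the paper only asserts.
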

\begin{proof}
  Let $\bm a=\sigma^\gamma$ be an $r$-coloured permutation of length $n$.
  When applying $\psi_m$ to $F_{\bm a}$, a summand
  $s := x_{i_1}^{(\gamma_1)}x_{i_2}^{(\gamma_2)}\cdots x_{i_n}^{(\gamma_n)}$
  is sent to zero if and only if $i_j > m$ for some $j$ or both $i_1 = 1$ and
  $\gamma_1 \neq 0$.
  Writing $i_0 = 1$ from now on, we can equivalently express the latter
  condition as $0\in \Des(\bm a)$ and $i_0 \nless i_1$.
  In all other cases, $s$ is sent to $\bp^{\bcol(\bm a)} x^{i_1+\dotsb + i_n - n}$.
  Using our convention $i_0 = 1$,
  we conclude that
  \begin{equation}
    \psi_m \left(F_{\bm a}\right) = \sum_{\substack{1 = i_0 \le i_1 \le i_2 \le \cdots \le
        i_n \le m \\j \in \Des(\bm a) \, \Rightarrow \, i_j < i_{j+1}}}
    \bp^{\bcol(\bm a)}
    x^{i_1 + i_2 + \cdots + i_n - n}. \label{eq:Heelp}
  \end{equation}
  In order to eliminate the strict inequalities on the right-hand side of
  \eqref{eq:Heelp}, we write
  \begin{align*}
    i_n'     & = i_n - \chi_0 - \chi_1 - \cdots - \chi_{n-1},     \\
    i_{n-1}' & = i_{n-1} - \chi_0 - \chi_1 - \cdots - \chi_{n-2}, \\
             & \phantom=\!\vdots                                  \\
    i_2'     & = i_2 - \chi_0 - \chi_1,                           \\
    i_1'     & = i_1 - \chi_0,
  \end{align*}
  where
  \[
    \chi_i :=
    \begin{cases}
      1, & \quad \text{if $i \in \Des(\bm a)$}, \\
      0, & \quad \text{otherwise}.
    \end{cases}
  \]%

The inequalities under the summation sign in \eqref{eq:Heelp} can be rewritten as $1\le i_1'\le \cdots \le i_n'\le m-\des(\bm a)$.
Next,
\[
  i_1 + i_2 + \cdots + i_n = i_1' + i_2' + \cdots + i_n' +
  \sum_{i=0}^{n-1} (n-i)\chi_i
\]
and
\[
  \sum_{i=0}^{n-1} (n-i)\chi_i
  = \sum_{i \in \Des(\bm a)} (n-i) = \comaj(\bm a).
\]
Equation~\eqref{eq:Heelp} therefore becomes
\begin{equation}
  \psi_m \left(F_{\bm a}\right) = \sum_{1 \le i_1' \le i_2' \le \cdots \le
    i_n' \le m-\des(\bm a)}
  \bp^{\bcol(\bm a)}
  x^{i_1' + i_2' + \cdots + i_n' - n + \comaj(\bm a)}.
  \label{eq:Heeelp}
\end{equation}
Writing $m' = m - \des(\bm a)$, using \eqref{eq:Heeelp}, we obtain
\begin{align*}
  \Psi(F_{\bm a}) & = \sum_{m=1}^\infty \psi_m(F_{\bm a}) t^{m-1} \\
                  & = \bp^{\bcol(\bm a)}x^{\comaj(\bm a)-n} \, t^{\des(\bm a)-1}
                    \sum_{1 \le i_1' \le \dotsb \le i'_{n} \le m'}
                    x^{i_1'+\dotsb+i_n'} \, t^{m'}
\end{align*}
whence the claim follows from the identity of formal power series
\[
  \sum_{1\le j_1\le\dotsb \le j_{n+1}} \lambda_1^{j_1}\dotsb \lambda_{n+1}^{j_{n+1}}
  = \frac{\lambda_1\dotsb \lambda_{n+1}}
  {
    (1- \lambda_1\dotsb \lambda_{n+1})
    \dotsb
    (1 - \lambda_{n}\lambda_{n+1})
    (1-\lambda_{n+1})
  }
\]
via the substitution $(\lambda_1,\dotsc,\lambda_n,\lambda_{n+1}) \gets (x,\dotsc,x,t)$.
\end{proof}

\begin{lem}
  \label{lem:substantial_leadership}
  Let $\bm a_1,\dotsc,\bm a_k$ be $r$-coloured permutations such that the triples
  $$(\des(\bm a_i), \comaj(\bm a_i), \bcol(\bm a_i))$$ are pairwise distinct.
  Then $\Psi(F_{\bm a_1}),\dotsc,\Psi(F_{\bm a_k})$ are linearly independent over $\QQ$.
\end{lem}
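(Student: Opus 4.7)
The plan is to leverage the explicit formula from Proposition~\ref{prop:qsym_specialization_color},
$$\Psi(F_{\bm a_i}) = \bp^{\bcol(\bm a_i)} \cdot \frac{x^{\comaj(\bm a_i)}\,t^{\des(\bm a_i)}}{(1-t)(1-xt)\cdots(1-x^{\lvert \bm a_i\rvert}t)},$$
and peel off the three pieces of data $(\bcol,\comaj,\des)$ one at a time. I would view the codomain as the $\QQ$-vector space $\QQ[\bp,x]\llbracket t\rrbracket$ (the Hadamard product structure is irrelevant to a linear-dependence question).

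First I would start from a hypothetical nontrivial relation $\sum_{i=1}^k c_i\,\Psi(F_{\bm a_i})=0$ with $c_i\in\QQ$, and partition the index set $\{1,\dotsc,k\}$ according to the value $\bm v=\bcol(\bm a_i)$. Since distinct weak compositions $\bm v$ produce distinct monomials $\bp^{\bm v}$, and these monomials are $\QQ(x)\llbracket t\rrbracket$-linearly independent, for each fixed $\bm v$ I obtain
$$\sum_{i\,:\,\bcol(\bm a_i)=\bm v} c_i \cdot \frac{x^{\comaj(\bm a_i)}\,t^{\des(\bm a_i)}}{(1-t)(1-xt)\cdots(1-x^{n}t)}=0,$$
where $n=v_0+\dotsb+v_{r-1}$ is the common length of all $\bm a_i$ with $\bcol(\bm a_i)=\bm v$.

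The crucial point here is that within a single $\bcol$-class the length $n$ is constant, so the denominators coincide and can be cleared. This reduces the identity to
$$\sum_{i\,:\,\bcol(\bm a_i)=\bm v} c_i\,x^{\comaj(\bm a_i)}\,t^{\des(\bm a_i)}=0$$
in the polynomial ring $\QQ[x,t]$. Because the triples $(\des(\bm a_i),\comaj(\bm a_i),\bcol(\bm a_i))$ are pairwise distinct and $\bcol$ is now fixed, the pairs $(\des(\bm a_i),\comaj(\bm a_i))$ in this subsum are pairwise distinct, so the monomials $x^{\comaj(\bm a_i)}t^{\des(\bm a_i)}$ are $\QQ$-linearly independent. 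Hence $c_i=0$ for every $i$ in this $\bcol$-class, and ranging over all $\bm v$ concludes the proof.

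There is no real obstacle; the only subtlety is keeping track of the fact that fixing $\bcol$ automatically fixes the length and therefore the denominator, which is precisely what allows the reduction to an elementary independence statement for monomials. The argument works verbatim whether $\Psi(F_{\bm a})$ is interpreted as a rational function, as a formal power series in $t$ over $\QQ(x)[\bp]$, or as an element of $\QQ[\bp,x]\llbracket t\Asterisk\rrbracket$.
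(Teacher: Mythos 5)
Your proof is correct, but it takes a different route from the paper's. You split the hypothetical relation first along the colour vector, using the fact that the distinct monomials $\bp^{\bcol(\bm a_i)}$ separate the sum into independent blocks, and then exploit the key observation that fixing $\bcol$ fixes the length $n$ and hence the denominator $(1-t)(1-xt)\cdots(1-x^n t)$, which is a unit in $\QQ[x]\llbracket t\rrbracket$ and can be cleared; the remaining identity $\sum_i c_i\, x^{\comaj(\bm a_i)} t^{\des(\bm a_i)}=0$ then dies by independence of distinct monomials in $\QQ[x,t]$. The paper instead runs a minimal-counterexample argument peeling off the $\des$ coordinate first: it records that $\Psi(F_{\bm a})=\bp^{\bcol(\bm a)}x^{\comaj(\bm a)}t^{\des(\bm a)}+\mathcal O(t^{\des(\bm a)+1})$, reduces the relation modulo $t^{d+1}$ where $d$ is the minimal descent number, and concludes from the distinctness of the resulting pairs $(\bcol,\comaj)$. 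Your version is arguably cleaner in that it is a direct argument (no contradiction, no minimality, no reordering of indices) and it handles all coefficients at once rather than only the leading block; the paper's version avoids any discussion of clearing denominators by working only with the lowest-order term in $t$. Both reduce, in the end, to linear independence of distinct monomials, and both are complete.
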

\begin{proof}
  First note that for an $r$-coloured permutation $\bm b$, we have
  \begin{equation}
    \label{eq:substantial_leadership}
    \Psi(F_{\bm b}) =
    \bp^{\bcol(\bm b)} x^{\comaj(\bm b)} t^{\des(\bm b)} + \mathcal
    O(t^{\des(\bm b)+1}).
  \end{equation}%
  Suppose, for the sake of contradiction, that the claim is false.
  Choose $k$ minimal such that $\Psi(F_{\bm a_1}),\dotsc,\Psi(F_{\bm a_k})$ are
  linearly dependent, say $\sum_{i=1}^k \lambda_i \Psi(F_{\bm a_i}) = 0$ for
  nonzero $\lambda_i\in \QQ$.
  Let $d = \min(\des(\bm a_1),\dotsc,\des(\bm a_k))$.
  After suitably permuting indices, we may assume that for some $1\le \ell\le k$,
  we have $\des(\bm a_i) = d$ if and only if $i\le \ell$.
  Then \eqref{eq:substantial_leadership} shows that
  $$
    0 \equiv \sum_{i=1}^k \lambda_i \Psi(F_{\bm a_i})
    \equiv \sum_{i=1}^\ell \lambda_i \Psi(F_{\bm a_i})
    \equiv \sum_{i=1}^\ell \lambda_i
    \bp^{\bcol(\bm a_i)} x^{\comaj(\bm a_i)} t^d
    \pmod{t^{d+1}}
  $$
  and thus $\sum_{i=1}^\ell \lambda_i \bp^{\bcol(\bm a_i)} x^{\comaj(\bm a_i)}
    = 0$.
  By assumption, the $(\bcol(\bm a_i), \comaj(\bm a_i))$ are pairwise distinct for
  $i = 1,\dotsc,\ell$.
  As distinct monomials are linearly independent, we conclude that $\lambda_1
    = \dotsb = \lambda_\ell = 0$, contradicting the minimality of our counterexample.
\end{proof}

\begin{proof}[Proof of Theorem~\ref{thm:triple_compatibility}]
  Equation~\eqref{eq:descomaj_color} and
  Lemma~\ref{lem:substantial_leadership} show that $\Psi$ satisfies the
  conditions on $\phi$ in
  Lemma~\ref{lem:shuffle_quotient}\ref{lem:shuffle_quotient1} and
  \ref{lem:shuffle_quotient2}, respectively.
  This concludes the proof.
\end{proof}

We record the following alternative form of
Theorem~\ref{thm:triple_compatibility}\ref{thm:triple_compatibility2} which
more closely resembles its ancestor \cite[Thm~4.15(c)]{GZ18} and relative
\cite[Thm~4.4.3]{Moustakas/th}.
The map $\tilde H$ in Corollary~\ref{cor:des+1} only differs from $H$ in
Theorem~\ref{thm:triple_compatibility} in the factor $z^{\abs{\bm
        a}}$ and an additional factor $t$ for $\abs{\bm a} > 0$.

\begin{cor}
  \label{cor:des+1}
  The linear map $\tilde H\colon \A_{(\des,\comaj,\bcol)}^{(r)} \to \QQ[\bm p, x, z]
    \llbracket t\Asterisk\rrbracket$ defined by
  $$
    [\bm a]_{(\des,\comaj,\bcol)} \mapsto
    \begin{cases}
      \frac{\bp^{\bcol(\bm a)} x^{\comaj(\bm a)} t^{\des(\bm a)+1}
      }{(1-t)(1-xt)\cdots(1-x^{\lvert \bm a\rvert}t)} z^{\lvert \bm a\rvert},
                     & \text{if $\lvert \bm a\rvert \ge 1$} \\
      \frac{1}{1-t}, & \text{if $\lvert \bm a\rvert=0$}
    \end{cases}
  $$ is an injective algebra homomorphism.
\end{cor}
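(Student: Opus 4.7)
The plan is to deduce Corollary~\ref{cor:des+1} from Theorem~\ref{thm:triple_compatibility}(ii) by an elementary bookkeeping argument that takes advantage of the fact that length is both determined by the equivalence class (via $\bcol$) and additive under shuffling. Writing $n = \lvert \bm a\rvert$, one observes that $\tilde H([\bm a]) = z^n \cdot t \cdot H([\bm a])$ for $n \ge 1$, while $\tilde H([\varnothing]) = H([\varnothing]) = \frac{1}{1-t}$. Since $[\varnothing]$ is the multiplicative identity of $\A_{(\des,\comaj,\bcol)}^{(r)}$ and $\frac{1}{1-t} = \sum_{k\ge 0} t^k$ is the identity for the Hadamard product in $t$, unit preservation is immediate.

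For multiplicativity on a pair of classes $[\bm a], [\bm b]$ with $\lvert\bm a\rvert,\lvert\bm b\rvert \ge 1$, I would rely on two elementary facts: every $\bm c \in \bm a \shuffle \bm b$ satisfies $\lvert\bm c\rvert = \lvert\bm a\rvert+\lvert\bm b\rvert$, and $(tA) *_t (tB) = t(A *_t B)$ for any power series $A,B$ over a common scalar ring (this is immediate from the coefficient-wise definition of the Hadamard product). Combining these with Theorem~\ref{thm:triple_compatibility}(ii) gives
\begin{align*}
\tilde H([\bm a]) *_t \tilde H([\bm b]) &= z^{\lvert\bm a\rvert+\lvert\bm b\rvert} \, t \bigl(H([\bm a]) *_t H([\bm b])\bigr) \\
&= \sum_{\bm c \in \bm a \shuffle \bm b} z^{\lvert\bm c\rvert} \, t \, H([\bm c]) = \tilde H([\bm a][\bm b]),
\end{align*}
as required. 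The residual cases in which one of $\bm a, \bm b$ is empty follow from unit preservation.

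For injectivity, I would exploit that $\lvert \bm a\rvert = \sum_j \col_j(\bm a)$ is recoverable from the equivalence class, so $\A_{(\des,\comaj,\bcol)}^{(r)}$ decomposes as a vector space into length-homogeneous pieces, and $\tilde H$ sends the length-$n$ piece into the $z^n$-component of $\QQ[\bm p,x,z]\llbracket t\Asterisk\rrbracket$. Any element of $\ker(\tilde H)$ then splits into length-homogeneous pieces, each of which must vanish. The length-$0$ piece is handled trivially because $\frac{1}{1-t}\neq 0$; on each length-$n$ piece with $n \ge 1$, $\tilde H$ factors as $z^n t \cdot H$, and the conclusion reduces to the injectivity of $H$ together with the obvious injectivity of multiplication by $t$ on $\QQ[\bm p,x,z]\llbracket t \rrbracket$.

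The entire argument is essentially bookkeeping and I do not foresee a genuine obstacle. The one point worth treating carefully is conceptual rather than technical: the factor $t$ appearing in $\tilde H$ is ordinary multiplication in the codomain and not a Hadamard product, so one must verify the compatibility $(tA)*_t(tB) = t(A*_t B)$ once up-front; after that, both algebra-homomorphism and injectivity statements follow directly from their counterparts for $H$.
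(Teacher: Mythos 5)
Your proof is correct and follows essentially the same route as the paper's: both reduce to Theorem~\ref{thm:triple_compatibility}(ii) via the identity $(tA)*_t(tB)=t(A*_tB)$ together with the additivity of length under shuffling. The only cosmetic difference is that the paper produces the factor $z^{\lvert\bm a\rvert}$ by composing $H$ with the substitution $p_i\mapsto p_iz$ (so the $z$-bookkeeping is automatic from $\sum_j\col_j(\bm a)=\lvert\bm a\rvert$), whereas you track it by hand; you also spell out the injectivity argument, which the paper leaves implicit.
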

\begin{proof}
  We write $[\cdot]$ instead of  $[\cdot]_{(\des,\comaj,\bcol)}$.
  Consider the injective homomorphism $\mu\colon \QQ[\bp, x]\to \QQ[\bp,x,z]$ which
  fixes $x$ and sends each $p_i$ to $p_i z$. Let $M$ be the injective
  homomorphism $\QQ[\bp, x]\llbracket t\Asterisk\rrbracket \to
    \QQ[\bp,x,z]\llbracket t\Asterisk\rrbracket$ with $M(\sum_{m=0}^\infty a_m t^m) =
    \sum_{m=0}^\infty \mu(a_m)t^m$.
  Let $H$ be as in Theorem~\ref{thm:triple_compatibility}.
  Then $J := M\circ H$ is the injective homomorphism which sends
  each $[\bm a]$ to
  $$\frac{\bp^{\bcol(\bm a)} x^{\comaj(\bm a)} t^{\des(\bm a)}
    }{(1-t)(1-xt)\cdots(1-x^{\lvert \bm a\rvert}t)} z^{\lvert \bm a\rvert}.$$
  By construction, $\tilde H([\bm a]) = t\cdot J([\bm a])$ for $\abs{\bm a} > 0$.
  Then $\tilde H$ and $t \cdot J$ agree on the $\QQ$-span, $L$ say, of all
  $[\bm a]$ with $\abs{\bm a}> 0$.
  We thus conclude that for $\abs{\bm a}, \abs{\bm b} > 0$, we have
  \[
    \tilde H([\bm a][\bm b])  = t \cdot J([\bm a][\bm b])
    = t\cdot (J([\bm a]) *_t J([\bm b]))
    = (t \cdot J([\bm a])) *_t (t \cdot J([\bm b])) \\
    = \tilde H([\bm a]) *_t \tilde H([\bm b]).
  \]
  Next, $\tilde H$ is injective on $L$ since $\tilde H$ and $t \cdot J$ agree
  on $L$.
  Finally, for the unique coloured permutation $\bm e$ with $\lvert \bm e\rvert =
  0$, we have $\tilde H([\bm e]) \notin \tilde H(L)$.
  Indeed, the constant term (as a series in~$t$) of each element of
  $\tilde H(L)$ is zero. 
  Hence, $\tilde H$ is injective on $\A_{(\des,\comaj,\bcol)}^{(r)}$.
\end{proof}

\subsection{Proof of Theorem~\ref{thm:main_elementary}}
\label{ss:proof_elementary}

We again write $[\cdot]$ instead of $[\cdot]_{(\des,\comaj,\bcol)}$.
Let $(f,\alpha)$ and $(g,\beta)$ be coherent labelled coloured
configurations as in Section~\ref{elementary}.
Choose $r \gg 0$ such that the coloured permutations appearing in $f$ and
$g$ only involve colours from $\{0,\dotsc,r-1\}$.
Let $H$ be as in Theorem~\ref{thm:triple_compatibility}.
Define $\rho_0,\dotsc,\rho_{r-1}\in \QQ(X)$ via
$$
  \rho_i = \begin{cases}
    \alpha(i), & \text{if } i\in \palette^*(f), \\
    \beta(i),  & \text{if } i\in \palette^*(g), \\
    1,         & \text{otherwise.}
  \end{cases}
$$%
The $\rho_i$ are well defined thanks to the coherence of $(f,\alpha)$ and
$(g,\beta)$; note that $\rho_0 = 1$.
Writing $\bm\rho = (\rho_0,\dotsc,\rho_{r-1})$,
we then find that $\bm\rho^{\bcol(\bm a)} = (\alpha\cup\beta)(\bm a)$
for each $r$-coloured permutation $\bm a$. \phantom{\qedhere}

Fix $\varepsilon \in \ZZ$.
Let $K\colon \QQ[\bm p,x]\llbracket t\Asterisk\rrbracket \to \QQ(X)\llbracket Y\!\Asterisk\rrbracket$ be
the algebra homomorphism which sends each $\sum_{n=0}^\infty
  h_n(p_0,\dotsc,p_{r-1},x)t^n$ to
$\sum_{n=0}^\infty h_n(\rho_0,\dotsc,\rho_{r-1},X^\varepsilon) Y^n$.
Let $L = K \circ H \colon \A_{(\des,\comaj,\bcol)}^{(r)} \to
  \QQ(X)\llbracket Y\!\Asterisk\rrbracket$.
Write $F = \sum_{\bm a\in \supp(f)} f_{\bm a} [\bm a]$ and $G = \sum_{\bm b \in
    \supp(g)} g_{\bm b} [\bm b]$.
By construction, we then have $L(F) = W^\varepsilon_{f,\alpha}$, $L(G) =
  W^\varepsilon_{g,\beta}$, and $L(FG) = W^{\varepsilon}_{f\shuffle g,
      \alpha\cup\beta}$.
The claim follows since
$$
  W^{\varepsilon}_{f\shuffle g, \alpha\cup\beta}
  = L(FG) = L(F)*_Y L(G) = W^\varepsilon_{f,\alpha} *_Y  W^\varepsilon_{g,\beta}.\eqno\qed
$$

\section{Applications to zeta functions}
\label{s:appzeta}

\subsection{Ask, class-counting, and orbit-counting zeta functions}
\label{ss:ask}

The main purpose of the present section is to recall several explicit families
of zeta functions associated with algebraic structures (Examples~\ref{ex:ask_mat}--\ref{ex:graphs}).
These families will feature in our applications of
Theorem~\ref{thm:main_elementary} in Section~\ref{ss:app}.
For further details on and motivation for the study of these zeta functions,
see \cite{Rossmann/18,ask2}.
In order to maintain consistency with the literature, we
regard $d\times e$ matrices over a ring $R$ as homomorphisms $R^d \to
  R^e$ acting by right multiplication.

\paragraph{Global ask zeta functions.}
Given a module $M\subseteq \Mat_{d\times e}(\ZZ)$ of integral matrices, for
each $n\ge 1$, let $M_n\subseteq \Mat_{d \times e}(\ZZ/n\ZZ)$ denote the
reduction of $M$ modulo $n$.
The \emph{(global) ask zeta function} \cite[Defn~3.1(i)]{Rossmann/18} of $M$
is the Dirichlet series $\zeta_M^{\ask}(s) = \sum_{n=1}^\infty a_n(M) n^{-s}$,
where $a_n(M) \in \QQ$ denotes the \underline average \underline size of the
\underline kernel of matrices in $M_n$.
By the Chinese remainder theorem,
$\zeta^{\ask}_M(s) = \prod_p \zeta^{\ask}_{M_p}(s)$ (Euler product), where
the product is taken over all primes $p$ and the \emph{local factor} at $p$ is
given by $\zeta^{\ask}_{M_p}(s) = \sum_{k=0}^\infty a_{p^k}(M) p^{-ks}$,
a power series in $p^{-s}$;
see \cite[Prop.\ 3.4(ii)]{Rossmann/18}.
Drawing upon deep results from $p$-adic integration and the theory of zeta
functions of algebraic structures,
it is known that each $\zeta^{\ask}_{M_p}(s)$ is rational in $p^{-s}$;
see \cite[Thm~1.4]{Rossmann/18}.
Moreover, as $p$ varies, $\zeta^{\ask}_{M_p}(s)$ is expressible as a weighted
sum of rational functions in $p$ and~$p^{-s}$, where the weights
are the numbers of $\FF_p$-rational points on certain $\ZZ$-defined varieties;
see \cite[Thm~4.11]{Rossmann/18}.
Thus, these zeta functions generally exhibit delicate arithmetic features.
In the present article, we will exclusively focus on so-called uniform
examples.
These are given by modules of matrices $M$ for which there exists a single
bivariate rational function $W(X,Y)\in \QQ(X,Y)$ such that
$\zeta^{\ask}_{M_p}(s) = W(p,p^{-s})$ for all primes $p$, perhaps allowing for
finitely many exceptions.

\paragraph{Local ask zeta functions.}
It is often advantageous to bypass global structures altogether and directly
study variants of the local factors from above.
Let $\mathfrak O$ be a compact discrete valuation ring.
Let $\mathfrak P$ be the maximal ideal of $\mathfrak O$ and let $q$
denote the size of the residue field $\mathfrak O/\mathfrak P$.
Such rings $\mathfrak O$ are precisely the valuation rings of non-Archimedean
local fields.
Examples include the ring of $p$-adic integers $\ZZ_p$ (in which case
$\mathfrak P = p\ZZ_p$, $q = p$, and
$\mathfrak O/\mathfrak P\cong \FF_p$) and the ring $\FF_q\llbracket z\rrbracket$ of formal
power series over $\FF_q$ (in which case $\mathfrak P = z \FF_q\llbracket z\rrbracket$).

Given a module of matrices $M\subseteq \Mat_{d\times e}(\mathfrak O)$,
its associated \emph{(local) ask zeta function} is the formal power series
$\Zeta^{\ask}_M(Y) = \sum_{k=0}^\infty \alpha_k(M) Y^k$, where $\alpha_k(M)$
denotes the average size of the kernels within the reduction of $M$ modulo
$\mathfrak P^k$.
Local ask zeta functions generalise local factors of global ask zeta
functions.
Indeed, for a submodule $M\subseteq \Mat_{d\times e}(\ZZ)$ and prime $p$,
let~$M_p$ denote the $\ZZ_p$-submodule of $\Mat_{d\times e}(\ZZ_p)$ generated
by $M$.
Then $\Zeta^{\ask}_{M_p}(p^{-s})$ coincides with the local factor
$\zeta^{\ask}_{M_p}(s)$ as defined above.

Explicit formulae for ask zeta functions associated with various ``well-known''
families of modules of matrices are recorded in \cite[\S
  5]{Rossmann/18}.
The following examples are of particular interest to us here.

\begin{ex}
  \label{ex:ask_mat}
  $\Zeta^{\ask}_{\Mat_{d\times e}(\mathfrak O)}(Y) = \frac{1-
    q^{-e}Y}{(1-Y)(1-q^{d-e}Y)}$; see \cite[Prop.~1.5]{Rossmann/18}.
\end{ex}

\begin{ex}
  \label{ex:ask_so}
  Let $\mathfrak O$ have characteristic distinct from $2$.
  Let $\mathfrak{so}_d(\mathfrak O)$ be the module of antisymmetric $d\times d$
  matrices over $\mathfrak O$.
  By \cite[Prop.~5.11]{Rossmann/18},
  $\Zeta^{\ask}_{\mathfrak{so}_d(\mathfrak O)}(Y) = \frac{1-q^{1-d}
    Y}{(1-Y)(1-qY)}
  =\Zeta^{\ask}_{\Mat_{d\times (d-1)}(\mathfrak O)}(Y)
  $.
\end{ex}

\begin{ex}
  \label{ex:ask_n}
  Let $\mathfrak n_d(\mathfrak O)$ be the module of strictly upper triangular
  $d\times d$ matrices over $\mathfrak O$.
  By \cite[Prop.~5.15(i)]{Rossmann/18},
  $\Zeta^{\ask}_{\mathfrak n_d(\mathfrak O)}(Y) = \frac{(1-Y)^{d-1}}{(1-qY)^d}$.
\end{ex}

\paragraph{Class- and orbit-counting zeta functions.}
Let $\mathfrak O$ be a compact discrete valuation ring as above.
Let $\mathsf G$ be a linear group scheme over $\mathfrak O$, with a given embedding
into $d\times d$ matrices.

The \emph{orbit-counting zeta function} of $\mathsf G$ is the generating function
$\Zeta^{\oc}_{\mathsf G}(Y) = \sum_{k=0}^\infty b_k(\mathsf G) Y^k$,
where $b_k(\mathsf G)$ denotes the number of orbits of the (finite) matrix
group $\mathsf G(\mathfrak O/\mathfrak P^k)$ acting by right-multiplication on
its natural module $(\mathfrak O/\mathfrak P^k)^d$.
Apart from the given linear action of $\mathsf G$, it is also natural to let
$\mathsf G$ act on itself by conjugation.
The \emph{class-counting zeta function} of $\mathsf G$ is the generating
function $\Zeta^{\cc}_{\mathsf G}(Y) = \sum_{k=0}^\infty c_k(\mathsf G) Y^k$,
where $c_k(\mathsf G)$ denotes
the number of conjugacy classes of $\mathsf G(\mathfrak O/\mathfrak P^k)$.
Class-counting zeta functions go back to work of
du~Sautoy~\cite{dS05}.
As shown in \cite[\S 8.5]{Rossmann/18} and \cite[\S 6.2]{ask2}, if $\mathsf G$
is unipotent, then, subject to (mild) restrictions on the residue field size
$q$ of $\mathfrak O$, the class- and orbit-counting zeta functions of $\mathsf
G$ are instances of ask zeta functions associated with modules of matrices
over $\mathfrak O$.
(These modules can be explicitly described in terms of the Lie algebra
of~$\mathsf G$.)
When passing between ask and class-counting zeta functions, one
often needs to apply a transformation $Y\gets q^m Y$ for a suitable integer
$m$; see below for an example.

\begin{ex}
  \label{ex:cc_F2d}
  Suppose that the residue field size $q$ of $\mathfrak O$ is odd.
  By exponentiation, the free class-$2$-nilpotent Lie algebra on $d$
  generators over $\mathfrak O$ gives rise to a group scheme $\mathsf F_{2,d}$ over
  $\mathfrak O$.
  We may view $\mathsf F_{2,d}$ as an analogue of the free
  class-$2$-nilpotent group on $d$ generators.
  Lins \cite[Cor.\ 1.5]{Lins/20} showed that
  $$\Zeta^{\cc}_{\mathsf F_{2,d}}(Y) = \frac{1-q^{\binom{d-1} 2}Y}{\Bigl(1 - q^{\binom d
      2}Y\Bigr)\Bigl(1- q^{\binom d 2 + 1}Y\Bigr)}.$$%
  Looking back at Example~\ref{ex:ask_so}, we observe that
  $\Zeta^{\cc}_{\mathsf F_{2,d}}(Y) = \Zeta^{\ask}_{\mathfrak{so}_d(\mathfrak
    O)}(q^{\binom d 2}Y)$;
  this is no coincidence, see \cite[Ex.~7.3]{ask2}.
\end{ex}

\begin{ex}
  \label{ex:oc_Ud}
  Let $\mathsf U_d$ be the group scheme of upper unitriangular $d\times d$ matrices over
  $\mathfrak O$.
  Suppose that $\gcd(q, (d-1)!) = 1$.
  By \cite[Thm~1.7]{Rossmann/18} (cf.~\cite[Prop.\ 4.12]{board})
  and Example~\ref{ex:ask_n}, we have
  $\Zeta^{\oc}_{\mathsf U_d}(Y) = \Zeta^{\ask}_{\mathfrak n_d(\mathfrak O)}(Y) = \frac{(1-Y)^{d-1}}{(1-qY)^{d}}$.
\end{ex}

\paragraph{Graphs and graphical groups.}
Given a (finite, simple) graph $\mathsf \Gamma$ with distinct vertices
$v_1,\dotsc,v_n$.
let $M_{\mathsf\Gamma}$ be the $\mathfrak O$-module of alternating
$n\times n$ matrices $A = [a_{ij}]$ such that $a_{ij} = 0$ whenever $v_i$ and $v_j$ are
non-adjacent.
(Here, a matrix is alternating if it is antisymmetric and all diagonal entries
are zero. Alternating and antisymmetric matrices coincide over rings in which
$2$ is not a zero divisor. In the case of $\mathfrak O$, this amounts to $0 \not= 2$.)
We write $\Zeta^{\ask}_{\mathsf\Gamma}(Y)$ for $\Zeta^{\ask}_{M_{\mathsf\Gamma}}(Y)$.
As shown in \cite[Thm~A]{cico}, $\Zeta^{\ask}_{\mathsf\Gamma}(Y)$ is a rational
function in $q$ and $Y$, without any restrictions on $q$.
In \cite[\S 3.4]{cico}, the graphical group scheme $\mathsf G_{\mathsf\Gamma}$
associated with $\mathsf\Gamma$ is constructed; for an alternative but equivalent
construction, see \cite[\S 1.1]{higman}.
By \cite[Prop.~3.9]{cico}, $\Zeta^{\cc}_{\mathsf G_{\mathsf\Gamma}}(Y) =
  \Zeta^{\ask}_{\mathsf\Gamma}(q^m Y)$, where $m$ is the number of edges of $\mathsf\Gamma$.
Given graphs $\mathsf\Gamma_1$ and $\mathsf\Gamma_2$, let $\mathsf\Gamma_1 \vee \mathsf\Gamma_2$ denote
their join, obtained from the disjoint union
$\mathsf\Gamma_1\oplus\mathsf\Gamma_2$ of $\mathsf\Gamma_1$ and $\mathsf\Gamma_2$ by
adding edges connecting each vertex of $\mathsf\Gamma_1$ to each vertex of $\mathsf\Gamma_2$.
Let $\mathsf K_n$ (resp.\ $\mathsf\Delta_n$) denote the complete (resp.~edgeless)
graph on $n$ vertices.

\begin{ex}
  \label{ex:graphs}
  The graph $\mathsf\Delta_n \vee \mathsf K_{n+1}$ is the threshold graph
  $\operatorname{Thr}(n,n+1)$
  in the notation from \cite[\S 8.4]{cico};
  this graph has $3\binom{n+1} 2$ edges.
  It follows from \cite[Thm~8.18]{cico} that
  $$
    \Zeta^{\ask}_{\mathsf\Delta_n \vee \mathsf K_{n+1}}(Y) = \frac{(1-q^{-n}Y)(1-q^{-n-1}Y)}{(1-q^{-1}Y)(1-Y)(1-qY)}.
  $$
\end{ex}

\begin{ex}
  \label{ex:more_graphs}
  In the same spirit,
  $\mathsf T_n := ((\mathsf\Delta_n \vee \mathsf K_{n+1}) \oplus \mathsf\Delta_{n+2}) \vee \mathsf K_{n+4} $ is the threshold graph
  $\operatorname{Thr}(n,n+1,n+2,n+4)$.
  This graph has $3n^2 + 12n + 18 = \binom{n+4}2 + 3\binom{n+1}2 + (n+4)(3n+3)$ edges.
  By \cite[Thm~8.18]{cico},
  $$
    \Zeta^{\ask}_{\mathsf T_n}(Y) = \frac{ (1-q^{-n-4}Y)(1-q^{-n-3}Y)^2 (1-q^{-n-2}Y)}{(1-q^{-3}Y)(1-q^{-2}Y)(1-q^{-1}Y)(1-Y)(1-qY)}.
  $$
\end{ex}

\paragraph{Hadamard products and zeta functions.}

Let $\mathfrak O$ be as above.
Elaborating further on what we wrote in the introduction,
modules of matrices, (linear) group schemes, and graphs all admit natural
operations which correspond to taking Hadamard products of associated zeta
functions.
In detail, given modules $M \subseteq \Mat_{d\times e}(\mathfrak O)$ and
$M'\subseteq \Mat_{d'\times e'}(\mathfrak O)$, we regard $M\oplus M'$ as a
submodule of $\Mat_{(d+d')\times (e+e')}(\mathfrak O)$, embedded
diagonally---we will refer to this as a \emph{diagonal direct sum}.
Then $\Zeta^{\ask}_{M\oplus M'}(Y) = \Zeta^{\ask}_{M}(Y) *_Y
  \Zeta^{\ask}_{M'}(Y)$.
Similarly, given (linear) group schemes $\mathsf G$ and $\mathsf G'$ over
$\mathfrak O$,
embedded into $d\times d$ and $d'\times d'$ matrices, respectively,
we embed $\mathsf G\times \mathsf G'$ diagonally into $(d+d')\times (d+d')$
matrices.
We have
$\Zeta^{\oc}_{\mathsf G\times \mathsf G'}(Y) = \Zeta^{\oc}_{\mathsf
    G}(Y) *_Y \Zeta^{\oc}_{\mathsf G'}(Y)$
and
$\Zeta^{\cc}_{\mathsf G\times \mathsf G'}(Y) = \Zeta^{\cc}_{\mathsf G}(Y) *_Y
  \Zeta^{\cc}_{\mathsf G'}(Y)$.
Finally, given graphs $\mathsf\Gamma$ and $\mathsf\Gamma'$, we have
$\Zeta^{\ask}_{\mathsf\Gamma \oplus \mathsf\Gamma'}(Y) = \Zeta^{\ask}_{\mathsf\Gamma}(Y) *_Y
  \Zeta^{\ask}_{\mathsf\Gamma'}(Y)$; moreover, $\mathsf G_{\mathsf\Gamma\oplus \mathsf\Gamma'} \cong
  \mathsf G_{\mathsf\Gamma} \times \mathsf G_{\mathsf\Gamma'}$.

In this way, explicit formulae and algorithms for computing Hadamard products
of rational generating functions become applicable to symbolic enumeration
problems in algebra.

\subsection{Zeta functions from labelled coloured configurations}
\label{ss:lcc_zeta}

It turns out that each zeta function from
Examples~\ref{ex:ask_mat}--\ref{ex:more_graphs}
can be expressed in terms of the rational functions $W_{f,\alpha}^\ve(X,Y)$
attached to labelled coloured configurations as in Section~\ref{elementary}.
In the following table, we write $\underline n$ for the sum of all $2^n$
coloured permutations of the form $1^{\nu_1} \dotsb n^{\nu_n}$
with $\nu_i\in \{0,i\}$.
A ``\dup{}'' indicates that an entry coincides with the one immediately
above it.

\begin{table}[h]
  \centering
  \begin{tabular}{|c|c|c|c|c|c|}
    \hline
    {Zeta function}                                                                                                                                               & $f$                    & $\alpha$                         & $\ve$          & $u(X)$                             & $W_{f,\alpha}^\ve(X,Y)$                                         \\ \hline
    $\Zeta^{\ask}_{\Mat_{d\times e}(\mathfrak O)}(Y)$                                                                                                             & $\underline 1$         & $1 \gets -X^{-d}$                & \small $d - e$ & $1$                                & $\frac{1-X^{-e}Y}{(1-Y)(1-X^{d-e}Y)}$                           \\ \hline
    \small
    \begin{tabular}{c} $\Zeta^{\ask}_{\mathfrak{so}_d(\mathfrak O)}(Y)$, \\$\Zeta^{\ask}_{\Mat_{d\times(d-1)}(\mathfrak O)}(Y)$\end{tabular} & \dup                   & \dup                             & $1$            & \dup                               & $\frac{1-X^{1-d}Y}{(1-Y)(1-XY)}$                                \\ \hline
    $\Zeta^{\cc}_{\mathsf F_{2,d}}(Y)$                                                                                                                            & \dup                   & \dup                             & \dup           & $X^{\binom d 2}$                   & \dup                                                            \\ \hline
    $\Zeta^{\ask}_{\mathsf\Delta_n\vee \mathsf K_{n+1}}(Y)$                                                                                                              & \underline 2           & \small $1,2\gets -X^{-n-1}$      & $1$            & $X^{-1}$                                & \small $\frac{(1-X^{1-n}Y)(1-X^{-n}Y)}{(1-Y)(1-XY)(1-X^2Y)}$ \\ \hline
    $\Zeta^{\cc}_{\mathsf G_{\mathsf\Delta_n\vee \mathsf K_{n+1}}}(Y)$
                                                                                                                                                                  &
                                                                                                                                                                    \dup
                                                                                                                                                                                           &
                                                                                                                                                                                             \dup                             & \dup           & \small $X^{3\binom{n+1} 2-1}$ & \dup                                                            \\ \hline
    $\Zeta^{\ask}_{\mathsf T_n}(Y)$ &
    \underline 4 &
    \small
    \begin{tabular}{c} $1,2\gets -X^{-n-3}$ \\
    $3,4 \gets -X^{-n-2}$ \end{tabular} & $1$            & $X^{-3}$
                                                                                                                                                                                                                              & \small $\frac{(1-X^{-n-1}Y)(1-X^{-n}Y)^2 (1-X^{1-n}Y)}{\prod_{i=0}^4 (1-X^iY)}$ \\ \hline

    $\Zeta^{\cc}_{\mathsf T_n}(Y)$
                                                                                                                                                                  &
                                                                                                                                                                    \dup
                                                                                                                                                                                           &
                                                                                                                                                                                            \dup                             & \dup           & \small $X^{5(n+3)(n+1)}$ & \dup                                                            \\ \hline
    $\Zeta^{\oc}_{\mathsf U_{d+1}}(Y)$                                                                                                                            & \small $\underline{d}$ & \small $1,\dotsc,d\gets -X^{-1}$ & 0              & $X$                                & $\frac{(1-X^{-1}Y)^{d}}{(1-Y)^{d+1}}$                           \\\hline
  \end{tabular}
  \caption{Examples of zeta functions from labelled coloured configurations}
  \label{tab}
\end{table}

\begin{pro}
  \label{prop:zeta_lcc}
  Let $\mathfrak O$ be a compact discrete valuation ring with residue field
  size $q$.
  Let $\Zeta(Y)$ be one of the rational generating functions in the first
  column of Table~\ref{tab}.
  We make the following additional assumptions:
  \begin{itemize}
    \item
          If $\Zeta(Y) = \Zeta^{\ask}_{\mathfrak{so}_d(\mathfrak O)}(Y)$, then we
          assume that $\mathfrak O$ has characteristic distinct from $2$.
    \item
          If $\Zeta(Y) = \Zeta^{\cc}_{\mathsf F_{2,d}}(Y)$, then we assume that $q$ is
          odd.
    \item
          If $\Zeta(Y) = \Zeta^{\oc}_{\mathsf U_d}(Y)$, then we assume
          that $\gcd(q,(d-1)!) = 1$.
  \end{itemize}
  Let $f$, $\alpha$, $\varepsilon$, and $u(X)$ be as in the corresponding row
  of Table~\ref{tab}.
  Then $$\Zeta(Y) = W_{f,\alpha}^\ve(q,u(q) Y).$$
\end{pro}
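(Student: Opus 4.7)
The plan is to reduce Proposition~\ref{prop:zeta_lcc} to a single closed-form evaluation of $W_{\underline m, \alpha}^\ve$ for an arbitrary label assignment $\alpha_i := \alpha(i)$, and then to check each row of Table~\ref{tab} against the explicit formulas recorded in Examples~\ref{ex:ask_mat}--\ref{ex:more_graphs}. The main combinatorial input is a careful descent analysis of the $2^m$ coloured permutations making up $\underline m$.

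First I would fix $\nu=(\nu_1,\dotsc,\nu_m)$ with $\nu_i\in\{0,i\}$ and analyse the descent set of $\bm a_\nu := 1^{\nu_1}\dotsb m^{\nu_m}$ with respect to the colour order of Section~\ref{ss:colperm}. A case check on the four possibilities $(\nu_i,\nu_{i+1})\in\{0,i\}\times\{0,i+1\}$ shows that for $i\in[m-1]$ position $i$ lies in $\Des(\bm a_\nu)$ precisely when $\nu_{i+1}\neq 0$; and $0\in\Des(\bm a_\nu)$ iff $\nu_1\neq 0$. Writing $S:=\{i\in[m]:\nu_i\neq 0\}$, this gives $\des(\bm a_\nu)=|S|$, $\comaj(\bm a_\nu)=\sum_{i\in S}(m-i+1)$, and $\alpha(\bm a_\nu)=\prod_{i\in S}\alpha_i$.

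Substituting into the definition of $W_{\underline m,\alpha}^\ve$ and summing over subsets $S\subseteq[m]$ then factorises cleanly, yielding
\begin{equation*}
  W_{\underline m,\alpha}^\ve(X,Y) \;=\; \frac{\prod_{i=1}^m \bigl(1+\alpha_i X^{\ve(m-i+1)}Y\bigr)}{\prod_{j=0}^m \bigl(1-X^{j\ve}Y\bigr)}.
\end{equation*}
With this identity in hand, each row of Table~\ref{tab} is handled by a direct specialisation: plug in the listed $\alpha_i$ and $\ve$, observe that the numerator collapses to the product displayed in the last column, and substitute $X\gets q$, $Y\gets u(q)Y$. The ask rows match Examples~\ref{ex:ask_mat}, \ref{ex:ask_so}, \ref{ex:graphs}, and \ref{ex:more_graphs} directly; the class-counting rows are reduced to the ask cases via the identities $\Zeta^{\cc}_{\mathsf F_{2,d}}(Y)=\Zeta^{\ask}_{\mathfrak{so}_d(\mathfrak O)}(q^{\binom d 2}Y)$ (Example~\ref{ex:cc_F2d}) and $\Zeta^{\cc}_{\mathsf G_{\mathsf\Gamma}}(Y)=\Zeta^{\ask}_{\mathsf\Gamma}(q^{m}Y)$ with $m$ the number of edges (Section~\ref{ss:ask}), so that the change from ask to cc is absorbed into an appropriate power of $X$ in $u(X)$; the orbit-counting row uses $\Zeta^{\oc}_{\mathsf U_{d+1}}(Y)=\Zeta^{\ask}_{\mathfrak n_{d+1}(\mathfrak O)}(Y)$ from Example~\ref{ex:oc_Ud}.

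The one delicate point is the descent analysis of $\bm a_\nu$: the colour order reverses the natural order on colour indices, so descents at positions $i\ge 1$ are governed by the comparison of $\gamma_i$ and $\gamma_{i+1}$ in $\ZZ$ (not in $\Gamma$), while the presence of $0$ in $\Des(\bm a_\nu)$ is governed by $\gamma_1\neq 0$ alone. Once these conventions are correctly tracked, everything downstream is mechanical bookkeeping and polynomial factorisation.
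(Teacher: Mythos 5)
Your proposal is correct and follows essentially the same route as the paper, which simply verifies the table entries by direct computation (doing two representative rows and leaving the rest to the reader). Your uniform product formula $W_{\underline m,\alpha}^\ve=\prod_{i=1}^m\bigl(1+\alpha_i X^{\ve(m-i+1)}Y\bigr)\big/\prod_{j=0}^m\bigl(1-X^{j\ve}Y\bigr)$, obtained from the (correct) observation that $\Des(\bm a_\nu)=\{i\in\{0,\dotsc,m-1\}:\nu_{i+1}\neq 0\}$, neatly subsumes both of the paper's verified cases and makes the remaining rows purely mechanical.
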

\begin{proof}
  These are straightforward calculations based on the explicit formulae
  listed above.
  We verify two cases and leave the others to the reader.

  \begin{itemize}
    \item $\Zeta(Y) = \Zeta_{\Mat_{d\times e}(\mathfrak O)}^{\ask}(Y)$:
          in this case, using Example~\ref{ex:1^0+1^1} and Example~\ref{ex:ask_mat},
          we find that
          $$
            W_{\underline 1, \alpha}^\varepsilon(q,Y) =
            \left(\frac{1 - X^{-d+\varepsilon} Y}{(1-Y)(1-X^\varepsilon
                Y)}\right)(X\gets q)
            = \Zeta_{\Mat_{d\times e}(\mathfrak O)}^{\ask}(Y).
          $$
    \item $\Zeta(Y) = \Zeta_{\mathsf U_d}^{\oc}(Y)$:
          by Example~\ref{ex:oc_Ud}, we have
          $\Zeta^{\oc}_{\mathsf U_{d+1}}(q^{-1}Y) =
            \frac{(1-q^{-1}Y)^{d}}{(1-Y)^{d+1}}$.
          A coloured permutation $\bm a = 1^{\nu_1}\dotsb d^{\nu_d}$
          with $\nu_i\in \{0,i\}$ is uniquely determined by its
          descent set
          $J = \Des(\bm a) = \{ i\in \{0,\dotsc,d-1\} : \nu_{i+1}\not= 0\}$.
          For the given function $\alpha$, we then have $\alpha(\bm a) = (-X)^{-\abs J}$.
          The claim follows since
          $$
            W_{\underline d,\alpha}^0(X,Y) =
            \frac{\sum_{J\subseteq \{0,\dotsc,d-1\}} (-X)^{-\abs J} Y^{\abs J}}
            {(1-Y)^{d+1}} = \frac{(1-X^{-1}Y)^d}{(1-Y)^{d+1}}
          $$
          by the Binomial Theorem.
          \qedhere
  \end{itemize}
\end{proof}

\subsection{Applications}
\label{ss:app}

Let $\mathfrak O$ be a compact discrete valuation ring with residue field size
$q$.
We now explain how, subject to a compatibility condition,
Theorem~\ref{thm:main_elementary} provides a method for explicitly computing Hadamard
products of the zeta functions in Table~\ref{tab}.
As explained in Section~\ref{ss:ask}, we can interpret such Hadamard products as zeta
functions associated with suitable products of the objects under consideration.
We first record an elementary observation.

\begin{lem}
  \label{lem:shift}
  Let $R$ be a ring.
  Let $A(Y) = \sum_{k=0}^\infty a_k Y^k$ and $B(Y) = \sum_{k=0}^\infty b_k
    Y^k$ be formal power series over $R$.
  Let $u,v\in R$.
  Then $A(uY) *_Y B(vY) = (A *_Y B)(uvY)$. \qed
\end{lem}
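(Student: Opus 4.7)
The plan is to prove Lemma~\ref{lem:shift} by a direct coefficient comparison: since the Hadamard product is defined coefficient-wise and the substitutions $Y\gets uY$ and $Y\gets vY$ simply rescale the $k$-th coefficient by $u^k$ and $v^k$ respectively, the identity should fall out immediately.

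Concretely, I would first expand the left-hand side by noting that $A(uY) = \sum_{k \ge 0} a_k u^k Y^k$ and $B(vY) = \sum_{k \ge 0} b_k v^k Y^k$, so that by the definition of the Hadamard product recalled just before Theorem~\ref{thm:main_elementary},
\[
  A(uY) *_Y B(vY) = \sum_{k=0}^\infty (a_k u^k)(b_k v^k) Y^k = \sum_{k=0}^\infty a_k b_k (uv)^k Y^k.
\]
Next, I would compute the right-hand side in the same way: since $(A *_Y B)(Y) = \sum_{k \ge 0} a_k b_k Y^k$, substituting $Y \gets uvY$ yields $(A *_Y B)(uvY) = \sum_{k \ge 0} a_k b_k (uv)^k Y^k$. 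Comparing coefficient-wise in $Y$ gives the claim.

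There is no real obstacle here; the only thing to be careful about is to interpret the substitution $Y \gets uY$ as a formal operation on power series (well defined since it amounts to rescaling coefficients), rather than as an evaluation. In particular, no convergence or topological issues arise, so the identity holds for arbitrary $u,v \in R$ over any commutative ring $R$.
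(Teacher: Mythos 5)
Your proof is correct, and it is exactly the coefficient-wise computation that the paper has in mind: the lemma is stated with a \qed and no written proof precisely because this direct comparison of coefficients is immediate. Nothing to add or fix.
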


As in Section~\ref{ss:lcc}, let $\mathbb U = \{ \pm X^{k} : k\in \ZZ\}$.
By combining Theorem~\ref{thm:main_elementary} and Lemma~\ref{lem:shift}, we
obtain the following.

\begin{cor}
  \label{cor:same_eps}
  Let $(f,\alpha)$ and $(g,\beta)$ be labelled coloured
  configurations.
  Let $\varepsilon \in \ZZ$ and let $u(X),v(X) \in \mathbb U$.
  Suppose that $f$ and $g$ are strongly disjoint.
  Then
  $$W_{f,\alpha}^\ve\bigl(X,\, u(X)Y\bigr) *_Y W_{g,\beta}^\ve\bigl(X,\, v(X)
  Y\bigr)
  = W_{f\shuffle g,\alpha\beta}^\ve \bigl(X,\,u(X)v(X)Y\bigr).
  \eqno\qed$$
\end{cor}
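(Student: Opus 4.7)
The plan is to combine Theorem~\ref{thm:main_elementary} directly with the Hadamard-product identity in Lemma~\ref{lem:shift}; no additional ingredient is required.

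First, I would apply Lemma~\ref{lem:shift} with $R = \QQ(X)$, regarding each $W_{f,\alpha}^\ve \in \QQ(X)\llbracket Y\rrbracket$ as a power series in $Y$ alone with coefficients in $\QQ(X)$. Taking $u = u(X)$ and $v = v(X)$ as scalars in $R$, the lemma immediately yields
\[
  W_{f,\alpha}^\ve\bigl(X, u(X)Y\bigr) *_Y W_{g,\beta}^\ve\bigl(X, v(X)Y\bigr)
  = \bigl(W_{f,\alpha}^\ve *_Y W_{g,\beta}^\ve\bigr)\bigl(X,\, u(X)v(X)Y\bigr).
\]

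Second, I would invoke the observation recorded at the end of Section~\ref{ss:lcc}: because $f$ and $g$ are strongly disjoint, the configurations $(f,\alpha)$ and $(g,\beta)$ are automatically coherent and, moreover, $\alpha\cup\beta$ equals the pointwise product $\alpha\beta$. Theorem~\ref{thm:main_elementary} therefore applies and gives $W_{f,\alpha}^\ve *_Y W_{g,\beta}^\ve = W_{f\shuffle g,\,\alpha\beta}^\ve$. Substituting this into the previous display produces the claimed identity.

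There is no real obstacle here: the strong disjointness hypothesis is precisely what allows us to replace the delicate $\alpha\cup\beta$ appearing in Theorem~\ref{thm:main_elementary} by the simpler pointwise product $\alpha\beta$, while Lemma~\ref{lem:shift} supplies the only bookkeeping needed to push the substitutions $Y \gets u(X)Y$ and $Y \gets v(X)Y$ through the Hadamard product.
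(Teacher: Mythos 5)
Your proposal is correct and matches the paper's own (implicit) argument exactly: the corollary is stated with a \qed precisely because it follows by combining Lemma~\ref{lem:shift} with Theorem~\ref{thm:main_elementary}, using the remark at the end of Section~\ref{ss:lcc} that strong disjointness gives coherence and $\alpha\cup\beta=\alpha\beta$. Nothing to add.
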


As detailed in Section~\ref{elementary}
(and explained in terms of $(\des,\comaj, \bcol)$-equivalence in
Section~\ref{s:coloured}),
given labelled coloured configurations $(f,\alpha)$ and $(g,\beta)$,
for the purpose of computing
$W_{f,\alpha}^\ve\bigl(X,\, u(X)Y\bigr) *_Y W_{g,\beta}^\ve\bigl(X,\, v(X)
  Y\bigr)$,
we may always pass to equivalent labelled coloured configurations such that
$f$ and $g$ are strongly disjoint.
(We will tacitly do so in the proofs below.)
The compatibility condition that we alluded to above is that we require
entries in the $\ve$-column of Table~\ref{tab} to agree for us to compute
associated Hadamard products via Corollary~\ref{cor:same_eps}.
All that remains to obtain our zeta function is then to specialise $X\gets q$.

We now record what we regard as the most appealing and interesting
applications of Theorem~\ref{thm:main_elementary} in the context of
Proposition~\ref{prop:zeta_lcc} and Table~\ref{tab}.
First, given a set of (uncoloured) permutations $P$, we write
$$\Pi(P)
  = \left\{
  \sigma_1^{\gamma_1}\dotsb \sigma_n^{\gamma_n} :
  \sigma_1\dotsb\sigma_n\in P, \, \gamma_i \in \{0,\sigma_i\} \text{ for } i = 1,\dotsc,n
  \right\}.$$
As usual, we write $\mathrm S_n = \{ \sigma_1\dotsb \sigma_n : \{
  \sigma_1,\dotsc,\sigma_n\} = [n]\}$ for the symmetric group on $n$ points.

For a function $\alpha\colon
  \Gamma \to \mathbb U$ and coloured permutation $\bm a$ as in Section~\ref{ss:lcc},
we write $\alpha_q(\bm a) = (\alpha(\bm a))(X\gets q)$.

\begin{cor}
  \label{cor:Hadamard_Mde}
  Let $d_1,\dotsc,d_n,e_1,\dotsc,e_n$ be such that $d_i - e_i$ is independent
  of $i$.
  Let $\delta := d_1 - e_1 = \dotsb = d_n - e_n$ denote this common value.
  Let $\alpha\colon \Gamma \to \mathbb U$ with $\alpha(i) = -X^{-d_i}$ for
  $i=1,\dotsc,n$ and $\alpha(c) = 1$ otherwise.
  Then
  $$\Zeta^{\ask}_{\Mat_{d_1\times e_1}(\mathfrak O) \oplus \dotsb \oplus
    \Mat_{d_n\times e_n}(\mathfrak O)}(Y) =
    \frac{\sum_{\bm a \in \Pi(\mathrm S_n)} \alpha_q(\bm a) q^{\delta \comaj(\bm
        a)}Y^{\des(\bm a)}}
    {(1-Y)(1-q^\delta Y)\dotsb (1-q^{n \delta}Y)}.
  $$
\end{cor}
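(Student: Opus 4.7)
The plan is to view the LHS as an iterated Hadamard product of the building blocks supplied by Table \ref{tab}, and then apply Corollary \ref{cor:same_eps} (which repackages Theorem \ref{thm:main_elementary}) $n-1$ times. First, using the product formula for ask zeta functions of diagonal direct sums recalled in Section \ref{ss:ask}, I would write
$$
\Zeta^{\ask}_{\bigoplus_{i=1}^n \Mat_{d_i\times e_i}(\mathfrak O)}(Y) = \Zeta^{\ask}_{\Mat_{d_1\times e_1}(\mathfrak O)}(Y) *_Y \dotsb *_Y \Zeta^{\ask}_{\Mat_{d_n\times e_n}(\mathfrak O)}(Y).
$$
By Proposition \ref{prop:zeta_lcc} (first row of Table \ref{tab}), the $i$-th factor equals $W^{d_i-e_i}_{\underline 1,\,\alpha_i}(q,Y)$ with $\alpha_i(1)=-X^{-d_i}$ and $u(X)=1$. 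The hypothesis $\delta=d_i-e_i$ (shared across all~$i$) is exactly the compatibility condition that allows Corollary \ref{cor:same_eps} to be applied iteratively with the common exponent $\ve=\delta$.

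Next, to make the configurations pairwise strongly disjoint, I would represent the $i$-th factor using symbol~$i$ and non-zero colour~$i$, i.e.\ replace $(\underline 1,\alpha_i)$ by the equivalent labelled coloured configuration $(i^0+i^i,\,\tilde\alpha_i)$ with $\tilde\alpha_i(i)=-X^{-d_i}$ and $\tilde\alpha_i=1$ elsewhere; equivalence preserves the rational function, so nothing is lost. Iterating Corollary \ref{cor:same_eps} (with $u(X)=v(X)=1$) then yields
$$
\Zeta^{\ask}_{\bigoplus_{i=1}^n \Mat_{d_i\times e_i}(\mathfrak O)}(Y) = W_{f,\alpha}^{\delta}(q,Y),
$$
where $f=(1^0+1^1)\shuffle\dotsb\shuffle(n^0+n^n)$ and the combined label, obtained as the pointwise product of the $\tilde\alpha_i$, coincides with the $\alpha$ of the statement.

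The main combinatorial step is to identify $f$ with $\sum_{\bm a\in \Pi(\mathrm S_n)} \bm a$. This is essentially tautological: a shuffle of the letters $1^{\nu_1},\dotsc,n^{\nu_n}$ (with $\nu_i\in\{0,i\}$) is precisely a coloured permutation $\sigma_1^{\gamma_1}\dotsb\sigma_n^{\gamma_n}$ with $\sigma_1\dotsb\sigma_n\in\mathrm S_n$ and $\gamma_j\in\{0,\sigma_j\}$, and each such coloured permutation arises exactly once because the underlying sets of symbols are disjoint. Unpacking $W_{f,\alpha}^{\delta}(q,Y)$ from the definition in Section \ref{elementary} then produces the claimed formula. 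The only genuine bookkeeping obstacle is verifying that the colour-symbol matching $c_i=i$ yields $\alpha_q(\bm a)=\prod_{j:\gamma_j\neq 0}(-q^{-d_{\sigma_j}})$, as asserted; this follows at once from $\tilde\alpha_i(i)=-X^{-d_i}$ and the multiplicativity of $\alpha$ along a coloured permutation.
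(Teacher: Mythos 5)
Your proposal is correct and follows essentially the same route as the paper: identify each factor via Proposition~\ref{prop:zeta_lcc} with the strongly disjoint configurations $(i^0+i^i,\alpha_i)$, apply Theorem~\ref{thm:main_elementary} iteratively to the Hadamard product, and observe that the iterated shuffles are exactly $\Pi(\mathrm S_n)$. The extra bookkeeping you supply on $\alpha_q(\bm a)$ is just a more explicit version of the paper's remark that $\alpha=\alpha_1\dotsb\alpha_n$.
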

\begin{proof}
  Let $\alpha_i\colon \Gamma \to \mathbb U$ with $\alpha_i(i) = -X^{-d_i}$ and
  $\alpha(c) = 1$ otherwise; note that $\alpha = \alpha_1\dotsb
    \alpha_n$.
  By Proposition~\ref{prop:zeta_lcc}, we have
  $\Zeta^{\ask}_{\Mat_{d_i\times e_i}(\mathfrak O)}(Y) =
    W^{\delta}_{i^0 + i^i, \alpha_i}(q,Y)$.
  By Theorem~\ref{thm:main_elementary}, we thus have
  $$\Zeta^{\ask}_{\Mat_{d_1\times e_1}(\mathfrak O) \oplus \dotsb \oplus
    \Mat_{d_n\times e_n}(\mathfrak O)}(Y)
    = \BigAsterisk_{i=1}^n \Zeta^{\ask}_{\Mat_{d_i\times e_i}(\mathfrak O)}(Y)
    = W^\delta_{(1^0+1^1) \shuffle \dotsb \shuffle (n^0 + n^n), \alpha}(q,Y).
  $$
  The claim follows since the iterated shuffles in question are
  precisely the elements of $\Pi(\mathrm S_n)$.
\end{proof}

We note that Corollary~\ref{cor:Hadamard_Mde} completely
answers \cite[Question~10.4(a)]{cico} and partially answers
\cite[Question~10.4(b)]{cico}.
What remains elusive is an explicit formula for the Hadamard products in
Corollary~\ref{cor:Hadamard_Mde} \itemph{without} our assumption that $d_1 - e_1 =
  \dotsb = d_n - e_n$.

\begin{rmk}
  Given \itemph{arbitrary} $d_i$ and $e_i$, a formula for
  $$\Zeta^{\ask}_{\Mat_{d_1\times e_1}(\mathfrak O) \oplus \dotsb \oplus
    \Mat_{d_n\times e_n}(\mathfrak O)}(Y) =
    \Zeta^{\ask}_{\Mat_{d_1\times e_1}(\mathfrak O)} *_Y \dotsb *_Y
    \Zeta^{\ask}_{\Mat_{d_n\times e_n}(\mathfrak O)}(Y)
  $$
  as a sum of
  $\Theta\bigl(\frac{n!}{(\log 2)^{n}}\bigr)$ (``Big Theta Notation'')
  explicit rational functions appears in \cite[Cor.~5.15]{cico}.
  It, however, remains unclear how to derive a meaningful expression of these
  Hadamard products as quotients of explicit combinatorially-defined polynomials.
  In case $d_1 - e_1 = \dotsb = d_n - e_n$,
  Corollary~\ref{cor:Hadamard_Mde} does just that.
\end{rmk}

\begin{rmk}
  \label{r:brenti}
  There is an evident bijection between $\Pi(\mathrm S_n)$ and the group
  $\mathrm B_n$ of signed permutations on $n$ points (of order $2^n n!$).
  With this identification, the special case $d_1 = e_1 = \dotsb = d_n = e_n$ of
  Corollary~\ref{cor:Hadamard_Mde} (essentially) coincides with \cite[Prop.\
    10.3]{cico} and its close relative \cite[Cor.\ 5.17]{Rossmann/18}
  (which is the case $d_1 = e_1 = \dotsb = d_n = e_n = 1$).
  To our knowledge, prior to the present work, these were the only known examples of
  Hadamard products of ask (as well as class- and orbit-counting) zeta functions
  expressed in terms of (coloured) permutation statistics.
  In the aforementioned results in the literature, the proofs rely on work of
  Brenti~\cite{Bre94}.
  These proofs are based on the coincidence of the rational generating
  functions in question with those attached to so-called $q$-Eulerian
  polynomials of signed permutations.
  This preceded more recent machinery surrounding shuffle
  compatibility.
  In particular, this earlier work is now explained as part of the framework presented here.
\end{rmk}

The following is a group-theoretic application of
Corollary~\ref{cor:Hadamard_Mde}.

\begin{cor}
  \label{cor:hada_F2d}
  Let $d_1,\dotsc,d_n \ge 1$.
  Define $\alpha$ as in Corollary~\ref{cor:Hadamard_Mde}.
  Suppose that $q$ is odd.
  Then
  $$\Zeta^{\cc}_{\mathsf F_{2,d_1} \times \dotsb \times \mathsf
    F_{2,d_n}}(q^{-\sum_{i=1}^n \binom{d_i} 2} Y)
    = \frac{\sum_{\bm a \in \Pi(\mathrm S_n)} \alpha_q(\bm a) q^{\comaj(\bm
        a)}Y^{\des(\bm a)}}
    {(1-Y)(1-q Y)\dotsb (1-q^{n}Y)}.
  $$
\end{cor}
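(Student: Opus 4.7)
The plan is to reduce the class-counting statement to the ask-statement already handled by Corollary~\ref{cor:Hadamard_Mde}, by exploiting the compatibility of products of groups with Hadamard products of their class-counting zeta functions, together with the coincidence between $\Zeta^{\cc}_{\mathsf F_{2,d}}$ and $\Zeta^{\ask}_{\mathfrak{so}_d(\mathfrak O)}$ from Example~\ref{ex:cc_F2d}.

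First, recall from Section~\ref{ss:ask} that $\Zeta^{\cc}_{\mathsf G \times \mathsf G'}(Y) = \Zeta^{\cc}_{\mathsf G}(Y) *_Y \Zeta^{\cc}_{\mathsf G'}(Y)$, so
\[
\Zeta^{\cc}_{\mathsf F_{2,d_1} \times \dotsb \times \mathsf F_{2,d_n}}(Y) = \BigAsterisk_{i=1}^n \Zeta^{\cc}_{\mathsf F_{2,d_i}}(Y).
\]
(The hypothesis that $q$ is odd is passed through to justify the use of Example~\ref{ex:cc_F2d} for each factor; note that it also implies $\operatorname{char}(\mathfrak O) \neq 2$, so that $\Zeta^{\ask}_{\mathfrak{so}_{d_i}(\mathfrak O)}$ is well-defined and, by Example~\ref{ex:ask_so}, equals $\Zeta^{\ask}_{\Mat_{d_i \times (d_i - 1)}(\mathfrak O)}$.) Example~\ref{ex:cc_F2d} then rewrites each factor as $\Zeta^{\ask}_{\mathfrak{so}_{d_i}(\mathfrak O)}(q^{\binom{d_i}{2}} Y)$.

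Next, I apply Lemma~\ref{lem:shift} iteratively: each individual shift $Y \gets q^{\binom{d_i}{2}} Y$ pulls out of the Hadamard product and combines multiplicatively, so
\[
\BigAsterisk_{i=1}^n \Zeta^{\ask}_{\mathfrak{so}_{d_i}(\mathfrak O)}\bigl(q^{\binom{d_i}{2}} Y\bigr)
= \left(\BigAsterisk_{i=1}^n \Zeta^{\ask}_{\mathfrak{so}_{d_i}(\mathfrak O)}\right)\!\bigl(q^{\sum_{i=1}^n \binom{d_i}{2}} Y\bigr).
\]
Substituting $Y \gets q^{-\sum_{i} \binom{d_i}{2}} Y$ on both sides of the combined identity then yields
\[
\Zeta^{\cc}_{\mathsf F_{2,d_1} \times \dotsb \times \mathsf F_{2,d_n}}\bigl(q^{-\sum_{i=1}^n \binom{d_i}{2}} Y\bigr) = \BigAsterisk_{i=1}^n \Zeta^{\ask}_{\Mat_{d_i \times (d_i - 1)}(\mathfrak O)}(Y),
\]
having used Example~\ref{ex:ask_so} on the right.

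Finally, the right-hand side is precisely the object computed by Corollary~\ref{cor:Hadamard_Mde} in the special case where $e_i = d_i - 1$, so that $\delta = d_i - e_i = 1$ is uniform in $i$ (this is the compatibility condition that makes Corollary~\ref{cor:same_eps} applicable, and it matches the $\varepsilon = 1$ entry for $\mathfrak{so}_{d_i}$ in Table~\ref{tab}). The labelling function $\alpha$ specified in Corollary~\ref{cor:hada_F2d} coincides with the one built from the individual $\alpha_i(i) = -X^{-d_i}$ used in Corollary~\ref{cor:Hadamard_Mde}. Substituting $\delta = 1$ into the formula there gives exactly the denominator $(1-Y)(1-qY)\dotsb(1-q^n Y)$ and the numerator $\sum_{\bm a \in \Pi(\mathrm S_n)} \alpha_q(\bm a) q^{\comaj(\bm a)} Y^{\des(\bm a)}$ claimed.

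The only mild obstacle is bookkeeping: keeping the shifts $q^{\binom{d_i}{2}}$ and the condition $d_i - e_i = 1$ consistent so that Corollary~\ref{cor:Hadamard_Mde} applies as stated. Everything else is a direct consequence of results already in hand.
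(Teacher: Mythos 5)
Your proposal is correct and follows the same route as the paper, whose proof is the one-line observation that $\Zeta^{\cc}_{\mathsf F_{2,d}}(Y) = \Zeta^{\ask}_{\Mat_{d\times(d-1)}(\mathfrak O)}(q^{\binom d 2} Y)$ combined with Corollary~\ref{cor:Hadamard_Mde} (with $\delta=1$) and Lemma~\ref{lem:shift}; you have simply spelled out the bookkeeping of the shifts and the multiplicativity of class-counting zeta functions under direct products.
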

\begin{proof}
  This follows since $\Zeta^{\cc}_{\mathsf F_{2,d}}(Y) =
    \Zeta^{\ask}_{\Mat_{d\times(d-1)}(\mathfrak O)}(q^{\binom d 2} Y)$.
\end{proof}

We can also symbolically compute the orbit-counting zeta function of $\mathsf
  U_{d_1+1}\times \dotsb \times \mathsf U_{d_n+1}$.

\begin{cor}
  \label{cor:hada_Ud}
  Given $d_1,\dotsc,d_n\ge 0$, write
  $D_i = d_1 + \dotsb + d_i$ and
  $$
    T = (D_0+1) \dotsb D_1
    \shuffle
    \dotsb
    \shuffle
    (D_{n-1}+1)\dotsb D_n
    \subseteq \mathrm S_{D_n},
  $$
  a set of uncoloured permutations of cardinality $\prod_{i=1}^n
    \binom{D_{i}}{D_{i-1}}$.
    Recall the definition of $\palette^*$ from Section~\ref{ss:colperm}.
  Let $m = \max\limits_{i=1,\dotsc,n} d_i$.
  Suppose that $\gcd(q, (m - 1)!) = 1$.
  Then
  \vspace*{-1em}
  $$
    \Zeta^{\oc}_{\mathsf U_{d_1+1}\times \dotsb\times \mathsf U_{d_n+1}}(q^{-n}Y) =
    \BigAsterisk_{i=1}^n \Zeta^{\oc}_{\mathsf U_{d_i+1}}(q^{-1}Y) =
    \frac{\sum_{\bm a\in \Pi(T)}
    (-q)^{-\abs{\palette^*(\bm a)}}\,
    Y^{\des(\bm a)}}
    {(1-Y)^{D_n+1}}.
  $$
\end{cor}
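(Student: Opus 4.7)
The plan is to combine the Hadamard product formalism for orbit-counting zeta functions of direct products (Section~\ref{ss:ask}) with the shift rule in Lemma~\ref{lem:shift} and the explicit $W$-representation of $\Zeta^{\oc}_{\mathsf U_{d+1}}$ from the last row of Table~\ref{tab}. First, applying the product formula $\Zeta^{\oc}_{\mathsf G\times \mathsf G'} = \Zeta^{\oc}_{\mathsf G} *_Y \Zeta^{\oc}_{\mathsf G'}$ iteratively and Lemma~\ref{lem:shift} with $u = v = q^{-1}$, I would rewrite the left-hand side as
$$\Zeta^{\oc}_{\mathsf U_{d_1+1}\times \dotsb\times \mathsf U_{d_n+1}}(q^{-n}Y) = \BigAsterisk_{i=1}^n \Zeta^{\oc}_{\mathsf U_{d_i+1}}(q^{-1}Y),$$
which matches the middle expression in the statement. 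The assumption on $q$ ensures $\gcd(q,(d_i-1)!) = 1$ for every $i$, so Proposition~\ref{prop:zeta_lcc} applies to each factor, yielding $\Zeta^{\oc}_{\mathsf U_{d_i+1}}(q^{-1}Y) = W^0_{\underline{d_i},\alpha_i}(q, Y)$ with $\alpha_i(j) = -X^{-1}$ for $j = 1,\dotsc,d_i$.

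Next, I would pass to equivalent labelled coloured configurations that are pairwise strongly disjoint. For the $i$-th factor, use the symbols $\{D_{i-1}+1,\dotsc,D_i\}$ and the nonzero colours from the same range; call the resulting configuration $(\underline{d_i}^{(i)},\alpha^{(i)})$. By the remark following Corollary~\ref{cor:same_eps}, this does not affect the associated $W$-function, so I may apply Corollary~\ref{cor:same_eps} (iterated, with $\ve = 0$ and trivial shifts $u(X) = v(X) = 1$) to conclude
$$\BigAsterisk_{i=1}^n W^0_{\underline{d_i}^{(i)},\alpha^{(i)}}(q,Y) = W^0_{\underline{d_1}^{(1)} \shuffle \dotsb \shuffle \underline{d_n}^{(n)},\,\alpha^{(1)}\dotsb\alpha^{(n)}}(q,Y).$$

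The main combinatorial step is to identify the iterated shuffle $\underline{d_1}^{(1)} \shuffle \dotsb \shuffle \underline{d_n}^{(n)}$ with the formal sum $\sum_{\bm a \in \Pi(T)} \bm a$. Each $\underline{d_i}^{(i)}$ is the sum over all $2^{d_i}$ coloured permutations of the increasing word $(D_{i-1}+1)\dotsb D_i$ whose colours are chosen independently from $\{0,D_{i-1}+j\}$ at position $j$; the underlying uncoloured shuffle is exactly $T$, and since the symbol sets are disjoint, each element of $\Pi(T)$ arises with multiplicity one. A quick cardinality check ($2^{D_n}\prod_i \binom{D_i}{D_{i-1}} = |\Pi(T)|$) confirms this. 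The combined label function satisfies $(\alpha^{(1)}\dotsb\alpha^{(n)})(\bm a) = (-X^{-1})^{|\palette^*(\bm a)|}$, since nonzero colours in the $i$-th component are exactly the colours from $\{D_{i-1}+1,\dotsc,D_i\}$ appearing in $\bm a$, each contributing a factor $-X^{-1}$.

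Finally, I would specialise $X \gets q$ and observe that with $\ve = 0$ the denominator in the defining formula for $W^0$ collapses to $(1-Y)^{|\bm a|+1} = (1-Y)^{D_n+1}$ (uniformly across all $\bm a$, as every element of $\Pi(T)$ has length $D_n$) and the factor $X^{\ve\comaj(\bm a)} = 1$ disappears. This yields the claimed numerator $\sum_{\bm a \in \Pi(T)} (-q)^{-|\palette^*(\bm a)|} Y^{\des(\bm a)}$ over $(1-Y)^{D_n+1}$. No step looks seriously difficult; the only point requiring care is the simultaneous relabelling of symbols \emph{and} nonzero colours to achieve strong disjointness, and the bookkeeping verifying that each element of $\Pi(T)$ occurs precisely once in the iterated shuffle.
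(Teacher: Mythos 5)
Your argument follows the paper's proof almost line for line: express each factor as $W^0_{\underline{D_{i-1}+1\dotsc D_i},\,\alpha_i}(q,Y)$, pass to strongly disjoint relabellings, apply Theorem~\ref{thm:main_elementary} (via Corollary~\ref{cor:same_eps} with $\ve=0$), and identify the iterated shuffle with $\Pi(T)$; all of that bookkeeping, including the label $(-q)^{-\abs{\palette^*(\bm a)}}$ and the collapse of the denominator to $(1-Y)^{D_n+1}$, is correct and is exactly what the paper does.

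The one point where you diverge, and where your argument does not quite close, is the justification of the $\gcd$ hypothesis. You claim that $\gcd(q,(m-1)!)=1$ gives ``$\gcd(q,(d_i-1)!)=1$ for every $i$, so Proposition~\ref{prop:zeta_lcc} applies to each factor.'' But the factor is $\mathsf U_{d_i+1}$, so the condition Proposition~\ref{prop:zeta_lcc} (via Example~\ref{ex:oc_Ud} with $d\gets d_i+1$) actually demands is $\gcd(q,d_i!)=1$; taken over all $i$ this amounts to $\gcd(q,m!)=1$, which is strictly stronger than the stated hypothesis $\gcd(q,(m-1)!)=1$ whenever $q$ shares a factor with $m$. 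The paper avoids this factor-by-factor route for precisely this reason: it justifies the hypothesis by applying \cite[Cor.~8.16]{Rossmann/18} to the \emph{product} group, whose nilpotency class is controlled by $\max_i d_i$, rather than invoking the per-factor statement of Example~\ref{ex:oc_Ud}. You should either strengthen the hypothesis you are using to $\gcd(q,m!)=1$ or, as the paper does, appeal to the group-to-Lie-algebra correspondence for the direct product as a whole.
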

\begin{proof}
  Define $\alpha\colon \Gamma \to \mathbb U$ with $\alpha(j) = -X^{-1}$ for
  $j = 1,\dotsc,D_n$ and $\alpha(c) = 1$ otherwise.
  Let $\alpha_i\colon \Gamma \to \mathbb U$ with $\alpha_i(j) = -X^{-1}$
  for $j=D_{i-1}+1,\dotsc,D_i$ and $\alpha_i(c) = 1$ otherwise;
  note that $\alpha = \alpha_1\dotsb \alpha_n$.
  By construction, for $\bm a \in \Pi(T)$, we have $\alpha_q(\bm a) = (-q)^{-\abs{\palette^*(\bm a)}}$.
  Extending our previous notation, for $a\le b$, write
  $\underline{a\dotsc b}$ for the sum over all $a^{\nu_a} \dotsb b^{\nu_b}$
  with $\nu_i$ satisfying $\nu_i \in \{0,i\}$.
  Then
  \begin{equation}
    \label{eq:Ud_Hadamard}
    \BigAsterisk_{i=1}^n \Zeta^{\oc}_{\mathsf U_{d_i+1}}(q^{-1}Y)
    =
    \BigAsterisk_{i=1}^n W^0_{\underline{D_{i-1}+1 \dotsc D_i},\alpha_i}(q,Y)
    =
    W^0_{\underline{D_0+1\dotsc D_1} \shuffle \dotsb \shuffle
    \underline{D_{n-1}+1\dotsc D_n},\alpha}(q,Y);
  \end{equation}
  the $\gcd$ condition on $q$ is justified by \cite[Cor.\ 8.16]{Rossmann/18}
  and the fact that $\mathsf U_{d_1}\times \dotsb \times \mathsf U_{d_r}$ has
  nilpotency class $\max_{i=1,\dotsc,n}d_i - 1$.
  The claim follows since the iterated shuffles in \eqref{eq:Ud_Hadamard} are
  precisely the elements of $\Pi(T)$.
\end{proof}

\subsection*{Acknowledgements}

AC and TR gratefully acknowledge interesting discussions with Christopher Voll.
AC was partially supported by the College of Science and Engineering at the
University of Galway through a Strategic Research (Millennium) Fund.
We are grateful to the anonymous referees for carefully reading the
manuscript and for providing valuable suggestions.

\bibliographystyle{abbrv}
\bibliography{hadamard.bib}

\end{document}